\begin{document}
\providecommand{\keywords}[1]{\textbf{\textit{Keywords: }} #1}
\newtheorem{theorem}{Theorem}[section]
\newtheorem{lemma}[theorem]{Lemma}
\newtheorem{proposition}[theorem]{Proposition}
\newtheorem{corollary}[theorem]{Corollary}
\newtheorem{problem}[theorem]{Problem}
\newtheorem{question}[theorem]{Question}
\newtheorem{conjecture}[theorem]{Conjecture}
\newtheorem{claim}[theorem]{Claim}
\newtheorem{condition}[theorem]{Condition}

\theoremstyle{definition}
\newtheorem{definition}[theorem]{Definition} 
\theoremstyle{remark}
\newtheorem{remark}[theorem]{Remark}
\newtheorem{example}[theorem]{Example}
\newtheorem{condenum}{Condition}

\def\p{\mathfrak{p}}
\def\q{\mathfrak{q}}
\def\s{\mathfrak{S}}
\def\Gal{\mathrm{Gal}}
\def\Ker{\mathrm{Ker}}
\def\soc{\mathrm{soc}}
\def\Coker{\mathrm{Coker}}
\newcommand{\cc}{{\mathbb{C}}}   
\newcommand{\ff}{{\mathbb{F}}}  
\newcommand{\nn}{{\mathbb{N}}}   
\newcommand{\qq}{{\mathbb{Q}}}  
\newcommand{\rr}{{\mathbb{R}}}   
\newcommand{\zz}{{\mathbb{Z}}}  
\def\K{\kappa}

\title{Sparsity of stable primes for dynamical sequences}
\author{Joachim K\"onig}
\address{Department of Mathematics Education, Korea National University of Education, Cheongju 28173, South Korea}
\email{jkoenig@knue.ac.kr}

\footnotetext{{\ 2020 Mathematics Subject Classification.} Primary 37P05; Secondary 37P25, 20B05} 
\maketitle
\begin{center}
{\it Dedicated to Peter M\"uller on the occasion of his 60th birthday}
\end{center}
\begin{abstract}
We show that a dynamical sequence $(f_n)_{n\in \mathbb{N}}$ of polynomials over a number field whose set of stable primes is of positive density must necessarily have a very restricted, and in particular virtually pro-solvable dynamical Galois group. Together with existing heuristics, our results suggest moreover that a polynomial $f$ all of whose iterates are irreducible modulo a positive density subset of the primes must necessarily be a composition of linear functions, monomials and Dickson polynomials.
\end{abstract}

\section{Introduction and main results}

A central question in arithmetic dynamics is the following: given a nonlinear polynomial $f$ with rational coefficients, can one find (many) values $a\in \mathbb{Q}$ such that $f^{\circ n}(X)-a$ is irreducible for all $n$, where $f^{\circ n}$ denotes the $n$-th iterate of $f$? In this case, the pair $(f,a)$ is called stable. The above question, which goes back to Odoni \cite{Odoni}, can be seen as the quest for a profinite analogue of Hilbert's irreducibility theorem (which ensures the existence of infinitely many $a$ yielding irreducibility for a {\it fixed} $n$). See also \cite{Jones} or \cite{JonesLevy} for investigations of this and related notions. 
Stability has furthermore been investigated modulo primes, and a key question is then, given a pair $(f,a)$ as above, how large is the set of primes $p$ such that $f^{\circ n}(X)-a$ is irreducible modulo $p$ for all $n$? The same questions can of course be asked for composition of not necessarily identical polynomials. We thus recall the following definitions.

\begin{definition}
Let $\kappa$ be a number field, and $(f_n)_{n\in \mathbb{N}}$ an infinite sequence of  polynomials in $\kappa[X]$ of degree $\ge 2$. A prime $p$ of $\kappa$ is called a stable prime for $(f_n)$ if, for all $n\in \mathbb{N}$, the mod-$p$ reduction of $f_1\circ \dots \circ f_n$ is defined, of non-decreasing degree, and irreducible. In particular, for $f\in \kappa[X]$ and $a\in \kappa$, $p$ is called a stable prime for $(f,a)$ iff it is a stable prime for the sequence $(f-a,f,f,\dots)$, i.e., if ($p$ does not divide the leading coefficient of $f$, and) all the polynomials $f^{\circ n}(X)-a$, $n\in \mathbb{N}$, are (defined and) irreducible modulo $p$.
\end{definition}

While there are known examples of pairs $(f,a)$ whose set of stable primes is of positive density, such as $f=x^2-2$, $a=0$ (see \cite[Remark 2.5]{Ferr}), it is generally expected that these occur only in quite exceptional situations. 
The objective of this article is to give a strong necessary (group-theoretical) condition for a dynamical sequence $(f_n)_{n\in \mathbb{N}}$ to have a positive density set of stable primes, which may on the one hand be used to conveniently derive ``density zero" conclusions in many individual cases, and which on the other hand gives the notion of exceptionality in the context of mod-$p$ stability a much more concrete shape, something that will be expanded on below. For a sequence $\mathcal{S}:=(f_n)_{n\in \mathbb{N}}$ of polynomials over a field $\K$, let us denote by $G_{\mathcal{S},\K}$ the projective limit of $\textrm{Gal}(f_1\circ \dots \circ f_n/\K)$, $n\to \infty$.
 
\begin{theorem}
\label{thm:main}
Let $\mathcal{S}:=(f_n)_{n\in \mathbb{N}}$ a sequence of polynomials over a number field $\K$, and assume that the set of stable primes of $\mathcal{S}$ is of positive density. Then 
there exists a finite extension $L/\K$ such that for all $n\in \mathbb{N}$, $\textrm{Gal}(f_1\circ\dots\circ f_n/L)$ embeds (as a permutation group) into an iterated wreath product $AGL_1(p_{k_n})\wr\dots\wr AGL_1(p_{1}) (\le S_{p_1\cdots p_{k_n}})$ for some index $k_n\in \mathbb{N}$ and prime numbers $p_1,\dots, p_{k_n}$. In particular, the projective limit $G_{\mathcal{S}, L}$ is pro-solvable.
\end{theorem}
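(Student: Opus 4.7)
My plan is to prove Theorem \ref{thm:main} by combining the Chebotarev density theorem with the classification of primitive permutation groups containing a full cycle. First, I would translate the hypothesis into a group-theoretic one: a prime $p$ of $\K$ is stable for $\mathcal{S}$ precisely when its Frobenius in each $G_n := \textrm{Gal}(f_1 \circ \dots \circ f_n / \K)$ acts as a single $d_n$-cycle on the roots, where $d_n := \deg(f_1 \circ \dots \circ f_n)$. Chebotarev then identifies the density of stable primes with the Haar measure of the set $\mathcal{C} \subseteq G_{\mathcal{S}, \K}$ of elements whose image in every $G_n$ is a $d_n$-cycle, so the hypothesis yields a uniform lower bound $|C_n|/|G_n| \geq \delta > 0$, with $C_n \subseteq G_n$ the set of $d_n$-cycles.

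Next I would exploit the block structure. For each $n$, refine the natural imprimitivity chain arising from the compositional tower $f_1,\dots,f_n$ to a maximal chain of block systems on the root set; this gives primitive quotients $Q_1, \dots, Q_m$ of $G_n$ of degrees $d_1, \dots, d_m$ with $\prod d_i = d_n$. Since a full $d_n$-cycle in $G_n$ projects to a full cycle in every $Q_i$, and since proportions can only grow under surjective quotient maps, each $Q_i$ is a primitive permutation group of degree $d_i$ containing a $d_i$-cycle, and the proportion of $d_i$-cycles in $Q_i$ is at least $\delta$. I would then apply the classification of primitive permutation groups containing a full cycle (Burnside for prime degree; Feit, Jones and Li for composite degree via CFSG): each $Q_i$ is either \emph{(i)} a subgroup of $AGL_1(p)$ for $d_i = p$ prime, or \emph{(ii)} of one of the exceptional types $S_d$ or $A_d$ ($d \geq 4$), almost simple groups $PSL_k(q) \leq Q \leq P\Gamma L_k(q)$ acting on the $(q^k-1)/(q-1)$ projective points, $M_{11}$, $M_{23}$, or $PSL_2(11)$.

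A direct check shows that in any group from list (ii) of degree $d$, the proportion of full cycles is at most $c/d$ for an absolute constant $c$; combined with the bound $\delta$, this forces the degree of every exceptional primitive quotient to be at most $c/\delta$. Moreover, each exceptional quotient contributes a multiplicative factor $\leq c/d_i$ strictly below $1$ (uniformly over the finite list of exceptional permutation groups of degree at most $c/\delta$), and since the product of these factors along all primitive levels of $G_n$ is bounded below by $\delta$, only finitely many exceptional levels can appear across the whole tower $(G_n)_n$. I would then absorb these finitely many levels into a finite extension $L/\K$: for each exceptional $Q_i$ of degree $d_i = p_{i,1}\cdots p_{i,k_i}$, choose a transitive subgroup $Q_i' \leq Q_i$ whose action on $d_i$ points embeds into $AGL_1(p_{i,k_i}) \wr \dots \wr AGL_1(p_{i,1})$ (such $Q_i'$ exists by direct inspection of the short exceptional list, e.g.\ taking an iterated $C_{p_{i,j}}$-wreath subgroup compatible with a suitable block chain), and let $L$ be the fixed field of the intersection of the preimages of these $Q_i'$ in $G_{\mathcal{S}, \K}$. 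This intersection has finite index, so $[L:\K]<\infty$. Over $L$, every primitive block quotient of $\textrm{Gal}(f_1 \circ \dots \circ f_n / L)$ is contained in some $AGL_1(p)$, which delivers the desired permutation embedding into the iterated wreath product, and the pro-solvability of $G_{\mathcal{S}, L}$ is then immediate.

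The hard part will be the quantitative content of the third step: one must extract explicit upper bounds of the form $c/d$ on the proportion of full cycles for every family in the exceptional list (feasible but requiring some structural input, particularly for the projective/Singer-cycle case), and then verify by case analysis on the finite list of bounded-degree exceptional primitive groups that each admits a transitive subgroup realizing the desired permutation embedding into an iterated wreath product of $AGL_1(p_i)$'s.
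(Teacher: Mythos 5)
Your overall strategy (Chebotarev reduction, classification of primitive groups containing a full cycle, ``only finitely many non-affine levels'', then absorbing them into a finite extension $L$) matches the paper's, but the quantitative core is asserted rather than proved, and this is a genuine gap. The primitive constituents $Q_i$ attached to a maximal chain of block systems are \emph{not} quotients of $G_n$: only the action on the coarsest blocks is a quotient, while the deeper $Q_i$ are images of block \emph{stabilizers}, i.e.\ sections. So ``proportions can only grow under surjective quotient maps'' does not give you that each $Q_i$ has full-cycle proportion at least $\delta$, nor that the full-cycle proportion of $G_n$ is bounded by a product of per-level factors. The danger is precisely that the block kernel $K=G\cap H^r$ at a given level may be a thin (e.g.\ twisted-diagonal) subgroup of $H^r$, so the product $\sigma_1\cdots\sigma_r\rho$ governing whether $\sigma\tau$ is a full cycle is far from equidistributed in $H$; bounding the proportion of full cycles in a coset $K\tau$ is exactly the content of Proposition \ref{prop:cruc}, which needs the structure of $\soc(K)$ as a product over a $G$-invariant partition (Lemma \ref{carmel}), the rigidity of the twisting automorphism via Corollary \ref{cor:overgr} (Li--Praeger), a count of $s$-cycles in each coset of the socle (Singer cycles in $PGL_d(q)$, Huppert), and Higman's complementation theorem in the solvable/$S_4$ cases. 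None of this is replaced by a soft argument in your proposal, and without it the multiplicativity ``product of these factors along all primitive levels is bounded below by $\delta$'' is unjustified.

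A second, concrete error: your ``direct check'' that every exceptional primitive group of degree $d$ has full-cycle proportion at most $c/d$ is false for the projective family. In $PGL_2(q)$ acting on $q+1$ points the $(q+1)$-cycles are the generators of the Singer tori, giving proportion $\varphi(q+1)/\bigl(2(q+1)\bigr)$, which does not decay like $1/q$ (it is of size $\varphi(s)/s$ up to a bounded factor, cf.\ the $\beta$-value for $S=PSL_d(q)$ in \eqref{eq:beta}); hence your conclusion that all exceptional levels have degree at most $c/\delta$ collapses. What survives (and what the paper actually proves, Theorem \ref{thm:ncycleprop}) is a factor of $\tfrac12$ per non-affine level inside the global bound $\varphi(n)/n$, which still forces finitely many such levels --- but only after the coset analysis above. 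Finally, your construction of $L$ by intersecting ``preimages'' of chosen transitive subgroups $Q_i'$ is not well defined for constituents that are sections rather than quotients, and even granting a finite-index subgroup you still owe an argument that its own primitive constituents are affine; the paper avoids this by taking $L=\Omega_{i_0}$, the splitting field of $f_1\circ\dots\circ f_{i_0}$, so that $\Gal(\Omega_j/L)$ is the kernel of $\tilde G_j\to\tilde G_{i_0}$, embeds in a direct power of the upper part, and hence (Krasner--Kaloujnine, with a cyclic group on top) into an iterated wreath product of groups $AGL_1(p_i)$.
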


Theorem \ref{thm:main} makes no assertion about the density of stable primes in the case where the monodromy groups $\textrm{Mon}_{f_i}:=\textrm{Gal}(f_i(X)-t/\K(t))$ already embed into groups $AGL_1(p_i)$. A ``density zero" conclusion incorporating this case as well was reached in Theorem 1.4(1) of \cite{KNR},\footnote{In fact, that result achieves a conclusion for a somewhat more general notion of stability.} under a ``large kernel" assumption, strengthening an earlier result of Ferraguti \cite{Ferr}, which reached the same conclusion under the assumption that the index of $G_n:=\textrm{Gal}(f_1\circ\dots\circ f_n/\K)$ inside the maximal possible group, namely the full iterated wreath product $S_{d_n}\wr\dots\wr S_{d_1}$ (where $d_i:=\deg(f_i)$), is $o(d_1\cdots d_n)$ as $n\to \infty$.
On the other hand, away from the affine case, Theorem \ref{thm:main} constitutes a significant improvement, yielding that the proportion of stable primes of $(f,a)$ is $0$ as soon as the order of $G_n$ grows asymptotically faster than the order of the $n$-fold iterated wreath product $U_n\wr\dots\wr U_1$ for any solvable subgroups $U_i\le S_{d_i}$.
In the special case where all $f_i$ are of bounded degree, comprising in particular the important case of iteration of one polynomial, we give a slight strengthening in Theorem \ref{thm:main_add}, which yields the density zero conclusion as long as the order of $G_n$ grows asymptotically faster than that of any iterated wreath product of {\it cyclic} groups (inside the symmetric group of degree $\deg(f_1)\cdots\deg(f_n)$). To see how dramatically small these groups are compared to the full iterated wreath product $S_{d_n}\wr\dots\wr S_{d_1}$, consider the case $d_i=p$ a prime number, in which case the above conclusion works with $U_1=\dots=U_n=C_p$. 

The proof of Theorem \ref{thm:main} is group-theoretic and will be achieved via a precise bound on the number of $n$-cycles in transitive permutation groups $G\le S_n$, see Theorem \ref{thm:ncycleprop}.

Regarding the role of the groups $AGL_1(p_i)$ in Theorem \ref{thm:main}, recall that the degree-$n$ Dickson polynomial with parameter $\alpha$ is defined as the unique degree-$n$ polynomial $D_{n,\alpha}$ satisfying $D_{n,\alpha}(X+\frac{\alpha}{X}) = X^n+ (\frac{\alpha}{X})^n$.\footnote{Dickson polynomials are linearly related (i.e., equal up to left- and right-composition with two linear polynomials $\lambda,\mu$) over $\mathbb{C}$, but not necessarily over $\K$ to the $n$-th Chebyshev polynomial.} In the context of monodromy, the groups $AGL_1(p)$ in the assertion of Theorem \ref{thm:main} gain a special relevance, as they are well-known to occur as the (arithmetic, over $\mathbb{Q}$) monodromy groups of monomials $X^p$ and Dickson polynomials $D_{p,\alpha}$, and ``essentially" only of those, see Theorem \ref{thm:monclass}.
In the context of Theorem \ref{thm:main}, this means, vaguely speaking, that for a polynomial $f$ which is {\it not} composed of polynomials linearly related to monomials or Dickson polynomials, the only way to obtain a positive density set of stable primes is for the groups $\textrm{Gal}(f^{\circ n}(X)-a/\K)$ ($n\in \mathbb{N}$) to be of much slower growing order than their geometric counterparts $\textrm{Gal}(f^{\circ n}(X)-t/\K(t))$. For example, in the case where $f$ is an indecomposable polynomial with monodromy group $A_d$ or $S_d$ ($d\ge 5$), it is shown in \cite{KNR} that the monodromy group $\textrm{Gal}(f^{\circ n}(X)-t/\K(t))$ of iterates $f^{\circ n}$ contains the full iterated wreath product $A_d\wr\dots\wr A_d$ for all $n\in \mathbb{N}$. It is furthermore expected that the projective limit $G_{a,\infty}:=\varprojlim \textrm{Gal}(f^{\circ n}(X)-a/\K)$ is ``usually" (i.e., for reasonably general $f$ and for all $a$ outside of some very concrete exceptions) very large inside the maximal possible group $G_{t,\infty}:=\varprojlim\textrm{Gal}(f^{\circ n}(X)-t/\K(t))$. For example, under rather general assumptions, $G_{a,\infty}$ is expected to be of finite index inside $G_{t,\infty}$, although this is hard to prove in general. See, e.g., \cite{BDGHT} or \cite{Jones13}.

 In view of such considerations, combined with Theorem \ref{thm:main}, it seems reasonable to ask:

\begin{question}
\label{ques:main}
Let $f\in \K[X]$ be a polynomial which is not a composition of polynomials linearly related to monomials or Dickson polynomials. 
Is it true that the proportion of stable primes for $(f,a)$ is $0$ for all $a\in \K$?
\end{question}

We give some evidence in favor of a positive answer to Question \ref{ques:main} in Theorem \ref{thm:evidence}. 
Answering the question in full seems very hard at this point. Special cases have been treated, e.g., in \cite{MOS} (for $f$ a trinomial, and $a=0$) and \cite{KNR} (for certain PCF maps and infinitely many $a$). Theorem \ref{thm:main} should allow to obtain such results for wider classes of polynomials, but this will be the subject of future research. 
We note that, in view of Theorem \ref{thm:main}, Question \ref{ques:main} asks about (indeed, very particular) solvable backward orbits $\cup_{n\in \mathbb{N}}f^{-n}(a)$ of polynomials $f$. 
Comparatively better understood is the even more special case of abelian backward orbits, cf.\ \cite{DZ} or \cite{FOZ}.

At this point, we at least have the following corollary about dynamical sequences having a {\it uniformly} lower-bounded set of stable primes for ``many" different fibers.
\begin{corollary}
\label{cor:manyfibers}
Let $(f_n)_{n\in \mathbb{N}}$ be a sequence of indecomposable polynomials $f_n\in \K[X]$, and $\epsilon>0$. If the proportion of stable primes of the sequence $(f_1(X)-a, f_2, f_3,\dots)$ exceeds $\epsilon$ for a non-thin set\footnote{Recall that a subset of a field $\K$ is called ``thin" if it is contained in the union of finitely many value sets of nontrivial morphisms $\varphi_i:X_i\to \mathbb{P}^1_\K$ of algebraic curves $X_i$ over $\K$.} of values $a\in \K$, then $f_n$ is linearly related over $\K$ to a monomial or a Dickson polynomial, for all but finitely many $n\in \mathbb{N}$. In particular, if $f\in \K[X]$ is any (not necessarily indecomposable) polynomial for which the proportion of stable primes of $(f,a)$ exceeds $\epsilon$ for a non-thin set of $a\in \K$, then $f$ is a composition of polynomials all linearly related to monomials or Dickson polynomials. \end{corollary}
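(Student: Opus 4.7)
The approach is to combine Theorem \ref{thm:main}, Hilbert's irreducibility theorem (HIT), and the monodromy classification of Theorem \ref{thm:monclass} in a three-stage descent. In the first stage, I would apply Theorem \ref{thm:main} to the sequence $(f_1-a,f_2,f_3,\dots)$ for each $a$ in the non-thin set $T$, producing a finite extension $L_a/\K$ and primes $p^{(a)}_1,\dots,p^{(a)}_m$ such that $\textrm{Gal}((f_1-a)\circ f_2\circ \cdots \circ f_m/L_a)$ embeds transitively into $AGL_1(p^{(a)}_m)\wr \cdots \wr AGL_1(p^{(a)}_1)$ for all $m$. Comparing the degree of the composition with the degree of the wreath product immediately forces $\deg(f_i)=p^{(a)}_i$ to be prime and, crucially, independent of $a$.

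The second stage extracts a constraint on the monodromy $\textrm{Mon}_{f_n}$ of each individual $f_n$. The tree of preimages underlying $f_1\circ \cdots \circ f_n$ matches the level structure of the wreath product, so projecting the path stabiliser at a root $\beta$ of $f_1\circ \cdots \circ f_{n-1}(X)-a$ yields an embedding $\textrm{Gal}(f_n(X)-\beta/L_a(\beta,\ldots))\hookrightarrow AGL_1(\deg f_n)$. As $a$ ranges over $T$ (non-thin in $\K$) and $\beta$ over the roots of $f_1\circ \cdots \circ f_{n-1}(X)-a$ (running through an algebraic cover of $T$, hence still a non-thin family by a theorem of Serre), HIT applied to $f_n(X)-t$ over a suitable number field furnishes a specialisation whose Galois group equals $\textrm{Mon}_{f_n}$ itself. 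One thus concludes that $\textrm{Mon}_{f_n}$, after a finite base extension, embeds as a transitive subgroup of $AGL_1(\deg f_n)$. In the third stage, Theorem \ref{thm:monclass} then identifies such an $f_n$ as linearly related over $\K$ to $X^p$ or to $D_{p,\alpha}$; the ``all but finitely many $n$'' clause accommodates small indices where the Stage-2 descent cannot yet be performed (before the intermediate tree is sufficiently rich).

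For the second assertion, I would decompose $f=g_1\circ \cdots \circ g_k$ as a composition of indecomposable polynomials (Ritt) and observe that the iterates $f^{\circ n}$ correspond to the periodic indecomposable sequence $(g_1-a,g_2,\dots,g_k,g_1,\dots)$. The first assertion applies verbatim, and periodicity then promotes ``all but finitely many'' to ``all'', forcing every $g_i$, and hence $f$, to be of the claimed type. The main obstacle is the Stage-2 descent: one must verify that, as $(a,\beta)$ varies, the resulting specialisations of $f_n(X)-t$ do not all fall inside the thin exceptional set associated to HIT. This is precisely where the non-thinness of $T$ is used essentially, together with the fact that $f_1\circ \cdots \circ f_{n-1}(X)-a$ is irreducible (being a term of a stable sequence) and hence defines a genuine connected cover rather than a reducible family that would be swallowed by the thin set.
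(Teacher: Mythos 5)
Your proposal has genuine gaps, concentrated in Stages~1 and~2. First, Theorem~\ref{thm:main} does not produce a wreath decomposition aligned with the individual polynomials $f_i$: in its proof the tower $\K\subset\K_1\subset\K_2\subset\cdots$ is \emph{refined} to a chain with no proper intermediate fields, so the number $k_n$ of affine factors need not equal $n$, and comparing permutation degrees only gives $p_1\cdots p_{k_n}=d_1\cdots d_n$. This does not force each $d_i=\deg(f_i)$ to be prime: an indecomposable $f_i$ of composite degree can specialize to a decomposable step, since the specialized Galois group is a transitive subgroup of the generic one and transitive subgroups of primitive groups need not be primitive. Hence the projection ``at the level of $f_n$'' yielding an embedding $\textrm{Gal}(f_n(X)-\beta/\,\cdot\,)\hookrightarrow AGL_1(\deg f_n)$ is not available. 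Second, even granting such embeddings for the constrained specializations $(a,\beta)$, the transfer to $\textrm{Mon}_{f_n}$ fails as stated: the points $\beta$ are roots of $f_1\circ\cdots\circ f_{n-1}(X)-a$, lying in extensions of $\K$ of unbounded degree that vary with $a$, so neither Hilbert's irreducibility theorem nor a non-thinness statement applies to this family over ``a suitable number field''; moreover the conclusion of Theorem~\ref{thm:main} holds only over a base extension $L_a$ depending on $a$ (with the finitely many exceptional indices also depending on $a$), so at best you would control some specialization subgroup over a moving field, not the arithmetic monodromy group over $\K(t)$. Note also that concluding mere (pro-)solvability would not suffice for the statement: you must rule out monodromy $S_4$, which is solvable but not of monomial/Dickson type, and your route to excluding it rests precisely on the flawed degree alignment.

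The paper's proof is the contrapositive and bypasses Theorem~\ref{thm:main} entirely. If infinitely many $f_i$ are not linearly related to a monomial or Dickson polynomial, then by Theorem~\ref{thm:monclass} each such (indecomposable) $f_i$ has monodromy group $S_4$ or a nonsolvable group. Choose $d$ with $2^{-d}<\epsilon$ and $n$ so that at least $d$ such indices occur among $f_1,\dots,f_n$. Applying Hilbert's irreducibility theorem once, at the level of $a$, gives that for all $a$ outside a thin set the specialized group $\textrm{Gal}((f_1\circ\cdots\circ f_n)(X)-a/\K)$ is the full monodromy group of the compositum; consequently the specialized tower has at least $d$ steps with group $S_4$ or nonsolvable, and Theorem~\ref{thm:ncycleprop} bounds the proportion of full cycles, hence of stable primes, by $2^{-d}<\epsilon$. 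This contradicts the hypothesis on a non-thin set of $a$. The uniform $\epsilon$ in the hypothesis is exactly what feeds the quantitative $2^{-d}$ bound; your per-$a$ application of Theorem~\ref{thm:main} discards this quantitative information and then has to recover a generic-fiber statement by the problematic transfer described above.
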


Note finally that the case when $f$ is a composition of linears, monomials and/or Dickson polynomials is very different, and there are indeed several known situations where the set of stable primes of $(f,a)$ is of positive density, see, e.g., \cite[Section 1.2]{MOS} or \cite[Remark 2.5]{Ferr}. Cf.\ also \cite[Section 19]{Benedetto} for a short overview of problems related to stability of primes.


{\bf Acknowledgement}: Investigations in the direction of Theorem \ref{thm:ncycleprop} were sparked a long time ago due to suggestions by Peter M\"uller. 
More recent discussions with Danny Neftin made the author aware of the relevance to arithmetic dynamics. I also thank the anonymous referee for several valuable suggestions.

\section{Preliminaries}

\subsection{Reduction to group theory} 
Claims about stable primes such as the one in Theorem \ref{thm:main} reduce directly to claims about permutation groups, due to the following well-known translations.
Firstly, in order for the polynomial $f_1\circ\dots\circ f_n$ to be irreducible modulo $p$, it of course needs to be separable modulo $p$. Under this assumption, denoting by $\Omega_n\supseteq \K$ the splitting field of $f_1\circ\dots\circ f_n$ and assuming furthermore that $p$ does not divide the leading coefficient of $f_1\circ\dots\circ f_n$, it follows that $p$ is unramified in $\Omega_n/\K$. Irreducibility modulo $p$ is then equivalent to the Frobenius at $p$ in $\Omega_n/\K$ generating a transitive cyclic subgroup of $\textrm{Gal}(\Omega_n/\K)$. Chebotarev's density theorem tells us that the density of the set of such primes $p$ equals the proportion of elements of $\textrm{Gal}(\Omega_n/\K)$ consisting of a single cycle. 

Furthermore, 
regarding the structure of $\textrm{Gal}(\Omega_n/\K)$, note that as long as $f_1\circ \dots \circ f_n$ is separable, the Galois group $\textrm{Gal}(\Omega_n/\K)$ embeds naturally as a permutation group into the $n$-fold iterated (imprimitive) wreath product $\Gamma_n\wr\dots\wr \Gamma_1$, where $\Gamma_i:=\textrm{Mon}_{f_i}:=\textrm{Gal}(f_i(X)-t/\K(t))$ is the (arithmetic) monodromy group of $f_i$. Here, the imprimitive wreath product $U\wr V$ of transitive groups $U\le S_s$ and $V\le S_r$ is $U\wr V = U^r\rtimes V\le S_{rs}$, acting on $r$ blocks of size $s$ each, with $V$ acting on $U^r$ via permutation of the $r$ copies of $U$.

We therefore need to estimate the proportion of full cycles\footnote{We call an $n$-cycle in a transitive group of permutation degree $n$ a ``full cycle" whenever it is unnecessary or inconvenient to emphasize the permutation degree.} in subgroups of iterated wreath products as above.

\subsection{Prerequisites about primitive and imprimitive permutation groups}
In view of the above, we first need the following classification of primitive permutation groups containing a cyclic transitive subgroup, which rests on the classification of finite simple groups, cf.\ \cite[Theorem 3]{Jones2}. Recall that a finite group $G$ is called almost simple if $S\le G\le Aut(S)$ for a nonabelian simple group $S$.
\begin{theorem}
\label{thm:class}
Let $G\le S_n$ be a primitive permutation group containing an $n$-cycle. Then one of the following holds:
\begin{itemize}
\item[1)] $G$ is solvable, and either\begin{itemize}
\item[1a)] $n=p$ is prime and $C_p\le G\le AGL_1(p)$, where $C_p$ and $AGL_1(p)$ denote the cyclic group of order $p$ and its symmetric normalizer respectively; or
\item[1b)] $G=S_4$.
\end{itemize}
\item[2)] $G$ is nonsolvable, and in fact almost simple with socle $S$ fulfilling one of the following:
\begin{itemize}
\item[2a)] $n\ge 5$ and $S=A_n\le G\le S_n$ (with $n$ odd if $G=A_n$).
\item[2b)] $n=\frac{q^d-1}{q-1}$ for a prime power $q$ and an integer $d\ge 2$ with $(d,q)\notin \{(2,2), (2,3)\}$; $S=PSL_d(q)$ and $PGL_d(q)\le G\le P\Gamma L_d(q)$.
\item[2c)] $n=11$ and $S=G\in \{M_{11}, PSL_2(11)\}$, or $n=23$ and $S=G=M_{23}$.
\end{itemize}
\end{itemize}
\end{theorem}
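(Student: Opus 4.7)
The plan is to invoke the Classification of Finite Simple Groups (CFSG) via the classification of $2$-transitive groups, after a classical reduction. By Burnside's theorem, a primitive group of prime degree $p$ is either $2$-transitive or has a regular normal Sylow $p$-subgroup; in the latter case one immediately obtains $C_p \le G \le N_{S_p}(C_p) = AGL_1(p)$, giving case~(1a). Schur's theorem extends this to composite degree: a primitive group of composite degree containing a regular cyclic subgroup is automatically $2$-transitive. I may therefore assume $G$ is $2$-transitive and appeal to CFSG.

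The CFSG-based classification splits $2$-transitive groups into affine type (where $G \le AGL_d(p)$ with $n = p^d$ and socle an elementary abelian $p$-group) and almost simple type (an explicit finite list of families). In the affine case, an element of order $n = p^d$ in $G$ would generate a regular cyclic subgroup, but for $d \ge 2$ this is impossible: the unique minimal normal subgroup $(\mathbb{Z}/p)^d$ is non-cyclic, and classical results of Schur on transitive groups of prime power degree rule out other regular cyclic subgroups in the primitive setting. Hence $d = 1$, which is again~(1a). What remains is to inspect the almost simple list.

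The core step is to test each almost simple $2$-transitive family for a full $n$-cycle. The natural actions of $A_n$ and $S_n$ always contain one (with the parity constraint $n$ odd for $A_n$), giving~(2a). For $PGL_d(q) \le G \le P\Gamma L_d(q)$ acting on $\mathbb{P}^{d-1}(\mathbb{F}_q)$, the image of a Singer cycle from $GL_d(q)$ produces an $n$-cycle of length $n = (q^d-1)/(q-1)$, yielding~(2b); the edge pairs $(d,q) \in \{(2,2),(2,3)\}$ collapse to $S_3$ (part of~(1a) with $p=3$) and $S_4$ (case~(1b)). All remaining families---the $2$-transitive actions of $PSU_3(q)$, $Sz(q)$, $R(q)$, $Sp_{2n}(2)$, the higher Mathieu groups $M_{12}, M_{22}, M_{24}$, the $12$-point action of $M_{11}$, the actions of $PSL_2(q)$ on $q+1$ points for the remaining $q$, and a few sporadic examples---must be eliminated.

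The main obstacle is this elimination step: in each remaining family one must show that no element has cycle type a single $n$-cycle. Many are ruled out cheaply by comparing element orders (read off from class structure, maximal tori, or character tables) against $n$: for instance $M_{12}, M_{22}, M_{24}$ contain no element of order $12,22,24$ respectively, and the rank-one families $Sz(q)$, $R(q)$ of degrees $q^2+1$, $q^3+1$ have no element of the relevant order for analogous torus-theoretic reasons. In the residual cases one additionally checks that an element of order $n$, if it exists, does not act as a single $n$-cycle. What survives this sieving is precisely the short list $\{M_{11}, M_{23}, PSL_2(11)\}$ of case~(2c); the delicate part is not the general philosophy but the arithmetic bookkeeping that the only pointwise-regular cyclic elements of prime-power or composite order in almost simple $2$-transitive actions occur in these very few geometric situations.
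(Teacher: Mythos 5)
First, a point of comparison: the paper does not prove this statement at all; it quotes it from G.~Jones (\emph{Cyclic regular subgroups of primitive permutation groups}, J.\ Group Theory 5 (2002), Theorem 3), and your outline (Burnside for prime degree, Schur's B-group theorem for composite degree to force $2$-transitivity, then the CFSG list of $2$-transitive groups sieved for full cycles) is exactly the standard route behind that reference. So in spirit you are reconstructing the right proof.

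However, your handling of the affine branch contains a genuine error, and it is located precisely at the one case the theorem singles out. You claim that in the affine case $n=p^d$ with $d\ge 2$ no $n$-cycle can exist, arguing from the non-cyclicity of the socle $(\mathbb{Z}/p)^d$ and an appeal to ``classical results of Schur''. Neither argument works: the $n$-cycle need not lie in (or even normalize into) the socle, and there is no such Schur result excluding regular cyclic subgroups of affine $2$-transitive groups --- indeed $AGL_2(2)=S_4$ acting on $4=2^2$ points contains a $4$-cycle, which is exactly case 1b). In the CFSG classification this $S_4$ has socle $V_4$ and is of \emph{affine} type, not almost simple, so you cannot legitimately recover it later through the remark that $PGL_2(3)$ ``collapses'' to $S_4$: that aside explains why $(d,q)=(2,3)$ is excluded from 2b), but within your own case division the degree-$4$ group must be produced by the affine analysis, which as written concludes $d=1$ and hence loses case 1b). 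The correct affine argument is elementary but has to be made: if $(v,A)\in AGL_d(p)$ acts as a $p^d$-cycle, then its order $p^d$ divides $p\cdot\mathrm{ord}(A)$, and a $p$-element $A\in GL_d(p)$ (unipotent) has order at most $p^{\lceil \log_p d\rceil}$, forcing $d\le 1+\lceil\log_p d\rceil$, i.e.\ $d=1$ or $(p,d)=(2,2)$; in the latter case one checks directly that $A_4$ has no $4$-cycle while $S_4$ does, yielding 1a) and 1b). Separately, your elimination of the remaining almost simple $2$-transitive families is only gestured at (``arithmetic bookkeeping''); the representative order comparisons you give for $M_{12}$, $M_{22}$, $M_{24}$ and the rank-one families are correct, but a complete proof would have to run through the whole list (including $Sp_{2m}(2)$ in both actions, $PSU_3(q)$, $HS$, $Co_3$, the degree-$12$ action of $M_{11}$, and the $PSL_2(q)$-actions with $q$ odd where $PGL_2(q)\not\le G$), as Jones does.
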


As noted in \cite[Theorem 1.2(2)]{Praeger}, the assumption of primitivity in Theorem \ref{thm:class} can actually be regained from weaker assumptions.
\begin{theorem}
\label{thm:praeger}
Let $G\le S_n$ be an almost-simple transitive nonsolvable group containing an $n$-cycle. Then $G$ is one of the groups in Case 2) of Theorem \ref{thm:class}; in particular, $G$ is primitive.
\end{theorem}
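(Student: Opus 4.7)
The strategy is to show by contradiction that $G$ is already primitive; Theorem \ref{thm:class} then directly yields the conclusion. So suppose $G$ is imprimitive, and fix a coarsest nontrivial block system of $G$, so that the induced action $\bar G$ on the $r\ge 2$ blocks of size $s\ge 2$ (with $rs=n$) is primitive. Let $K=\ker(G\to\bar G)$. Then $\sigma^r\in K$ acts as an $s$-cycle on each block, while $\bar\sigma\in\bar G$ is an $r$-cycle.

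The first step is to locate the socle $S$ relative to $K$. Since $S$ is simple, $S\cap K\in\{1,S\}$. In the first subcase, $[K,S]\le K\cap S=1$ (both $K$ and $S$ being normal in $G$) gives $K\le C_G(S)$; but for $G$ almost simple with nonabelian socle $S$ one has $C_G(S)=1$, since $C_G(S)\trianglelefteq G$ with $C_G(S)\cap S=Z(S)=1$, so a nontrivial $C_G(S)$ would contain a minimal normal subgroup of $G$ disjoint from $S$, contradicting $\mathrm{soc}(G)=S$. This contradicts $\sigma^r\ne 1$, so $S\le K$. Consequently $\bar G=G/K$ is a quotient of $G/S\hookrightarrow\mathrm{Out}(S)$, which is solvable by Schreier's conjecture (via CFSG). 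Thus $\bar G$ is a solvable primitive permutation group of degree $r$ with an $r$-cycle, and Theorem \ref{thm:class}(1) forces either $r$ prime with $\bar G\le AGL_1(r)$, or $(\bar G,r)=(S_4,4)$; in particular $r$ divides $|\mathrm{Out}(S)|$ (up to the $S_4$-exception), hence is small.

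Next, $K$ itself is almost simple with socle $S$ (as $C_K(S)\le C_G(S)=1$), so every nontrivial normal subgroup of $K$ contains $S$. The pointwise stabilizer $N_B\trianglelefteq K$ of a block $B$ satisfies $N_B\cap S=1$: the kernel of the action of $S$ on $B$ is normal in $S$, hence equals $1$ or $S$, and if it were $S$ then $G$-conjugation would propagate the triviality to all blocks, contradicting $S\ne 1$. Hence $N_B=1$, $K\hookrightarrow\mathrm{Sym}(B)$ faithfully, and $K$ is a transitive almost simple nonsolvable group of degree $s<n$ containing an $s$-cycle. By induction on $n$ (with base case $n$ prime, where $G$ is automatically primitive), $K$ falls in Case 2 of Theorem \ref{thm:class}. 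The main obstacle is then the final contradiction from the explicit list: for $S\in\{M_{11},M_{23}\}$, $|\mathrm{Out}(S)|=1$ rules out $r\ge 2$ immediately; in the remaining classical families ($S=A_s$, $PSL_d(q)$, or $PSL_2(11)$), a direct analysis inside $G\le K\wr\bar G$ --- controlling the cycle types of elements of $\mathrm{Aut}(S)$ of order $n=rs$ and exploiting that any such element typically fixes a point in the degree-$s$ action of $S$, so that its square (or suitable power) lies in a point stabilizer of $K$ --- rules out a genuine $n$-cycle in the induced $n$-point imprimitive coset action.
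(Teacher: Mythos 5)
Your reduction steps are sound and essentially reproduce the standard setup: taking a maximal block system, showing $S\le K$ (via $C_G(S)=1$), deducing that the block action $\bar G$ is a solvable primitive group with an $r$-cycle (Schreier plus Case 1 of Theorem \ref{thm:class}), and showing that $K$ acts faithfully, transitively and (by induction) primitively with an $s$-cycle on a block, so that $K^B$ lands in Case 2 of Theorem \ref{thm:class}. Note, however, that the paper does not prove this statement at all: it is quoted from \cite[Theorem 1.2(2)]{Praeger}, and the substance of that result is precisely the step you do not carry out.

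That missing step is a genuine gap, not a routine verification. Having arrived at ``$G\le\mathrm{Aut}(S)$ almost simple, $r\ge 2$ blocks, $r$ dividing $|\mathrm{Out}(S)|$, $K^B$ in Case 2'', the entire content of the theorem is to show that no element $\sigma\in G$ of order $n=rs$ whose $r$-th power restricts to an $s$-cycle of $K^B$ can exist, and your proposal replaces this with ``a direct analysis \dots exploiting that any such element typically fixes a point \dots rules out a genuine $n$-cycle''. The word ``typically'' is not an argument, and the mechanism you suggest (a power of $\sigma$ landing in a point stabilizer of $K$) is not the one that works. What is actually needed is a case-by-case elimination: for $S=A_s$ one must argue that an element of $S_s$ (or of $\mathrm{Aut}(A_6)$, where the exceptional index-$s$ subgroups and the extra outer automorphisms must be handled separately) whose square is an $s$-cycle already generates a transitive cyclic group of order $s$, contradicting order $2s$; for $S=PSL_d(q)$ one must know that the $s$-cycle in $K^B$ is a Singer cycle of $PGL_d(q)$ (which fails for $(d,q)=(2,8)$, a case needing separate treatment, cf.\ \cite{Jones2}), and then show that the Singer subgroup is self-centralizing in $\mathrm{Aut}(S)$, i.e.\ has no proper cyclic overgroup -- this requires computing the action of field and graph automorphisms on the Singer subgroup, e.g.\ that the multiplicative order of $p$ modulo $(q^d-1)/(q-1)$ is $df$ and that no power of $p$ is $\equiv -1$ there; the cases $PSL_2(11)$, $M_{11}$, $M_{23}$ need element-order checks. (Observe also that you cannot invoke Corollary \ref{cor:overgr} for the Singer step, since in the paper that corollary is deduced \emph{from} the present theorem.) Without this analysis the proof is incomplete; with it, your argument would essentially be a re-proof of the cited result of Li and Praeger rather than an application of it.
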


As a direct consequence, we obtain:
\begin{corollary}
\label{cor:overgr}
Let $G\le S_n$ be an almost simple primitive nonsolvable group containing an $n$-cycle $\tau$, and let $S:=\textrm{soc}(G)$. Then there is no proper cyclic overgroup of $\langle\tau\rangle$ in $Aut(S)$.
\end{corollary}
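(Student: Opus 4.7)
The plan is to reduce to the centralizer computation $C_{Aut(S)}(\tau) = \langle\tau\rangle$. Indeed, any cyclic overgroup $\langle\sigma\rangle$ of $\langle\tau\rangle$ automatically satisfies $\sigma \in C_{Aut(S)}(\tau)$, since $\tau$ is a power of $\sigma$; so if the centralizer equals $\langle\tau\rangle$, then $\sigma \in \langle\tau\rangle$ and no proper overgroup can exist. By Theorem \ref{thm:praeger}, the group $G$ falls into one of the three subcases of Case 2) of Theorem \ref{thm:class}, and I would verify the centralizer claim in each.

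For Case 2a), where $S = A_n$: the standard computation $C_{S_n}(\tau) = \langle\tau\rangle$ for an $n$-cycle suffices whenever $Aut(A_n) = S_n$, that is, for $n \ne 6$. The case $n = 6$ is exceptional because $Out(A_6) \cong C_2 \times C_2$. Here I would exploit that this quotient has exponent two, so $\sigma^2 \in A_6$ for every $\sigma \in Aut(A_6)$. If $\sigma^k = \tau$ with $k \ge 2$, then for even $k$ the element $\sigma^k$ lies in $A_6$, contradicting the oddness of a $6$-cycle; for odd $k$, the images of $\sigma$ and $\tau$ in $Out(A_6)$ coincide, forcing $\sigma \in S_6$, and then $C_{S_6}(\tau) = \langle\tau\rangle$ closes the gap.

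For Case 2b), where $S = PSL_d(q)$, the element $\tau$ is the projective image of a Singer cycle in $GL_d(q)$, which is self-centralizing as it generates a maximal torus isomorphic to $\ff_{q^d}^{\times}$; this yields $C_{PGL_d(q)}(\tau) = \langle\tau\rangle$. Field automorphisms raise $\tau$ to a proper Galois conjugate, and for $d \ge 3$ the graph automorphism inverts $\tau$, so no outer element of $Aut(S)$ centralizes $\tau$. Case 2c), covering $S \in \{M_{11}, PSL_2(11), M_{23}\}$, follows by direct inspection: each of these groups has at most cyclic-of-order-two outer automorphism group, and the Sylow-$n$ subgroup is cyclic of order $n$ and self-centralizing in $Aut(S)$, as can be read off from the Atlas.

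The main subtlety is the $n = 6$ exception in Case 2a), caused by the extra outer automorphism of $A_6$ that enlarges $Aut(S)$ strictly beyond $S_n$; everywhere else the centralizer computations are standard facts about symmetric or Lie-type groups and essentially immediate.
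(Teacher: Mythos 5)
Your reduction to the self-centralizing property $C_{\mathrm{Aut}(S)}(\tau)=\langle\tau\rangle$ is legitimate, and your Cases 2a) and 2c) are fine (including the $n=6$ subtlety). The gap is in Case 2b). First, the opening claim that $\tau$ must be the projective image of a Singer cycle of $GL_d(q)$ is false for $(d,q)=(2,8)$, which Case 2b) of Theorem \ref{thm:class} does not exclude: the degree-$9$ group $P\Gamma L_2(8)$ contains $9$-cycles in all three cosets of its socle $PSL_2(8)=PGL_2(8)$ (this is exactly the exceptional case of \cite[Theorem 1]{Jones2}, treated separately in the proof of Proposition \ref{prop:cruc}), so for such $\tau$ your torus computation does not apply. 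The case is easy to repair -- here $\mathrm{Aut}(S)=P\Gamma L_2(8)$ still acts on the $9$ points, so a cyclic overgroup of $\langle\tau\rangle$ would be a transitive abelian, hence regular, subgroup of $S_9$ and thus equal to $\langle\tau\rangle$ -- but as written the step fails. Second, dismissing outer elements by checking only the standard representatives (``a field automorphism sends $\tau$ to a proper Galois conjugate, the graph automorphism inverts $\tau$'') does not give $C_{\mathrm{Aut}(S)}(\tau)=\langle\tau\rangle$: you must rule out arbitrary elements $g\phi^i$ and $g\gamma\phi^i$ with $g\in PGL_d(q)$ acting trivially on $\langle\tau\rangle$. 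Since the normalizer of the Singer subgroup in $PGL_d(q)$ already induces the power maps $t\mapsto t^{q^j}$, this comes down to the arithmetic facts that $p$ has order exactly $ed$ modulo $n=\frac{q^d-1}{q-1}$ (where $q=p^e$), and, for the graph coset with $d\ge 3$, that $-1\notin\langle p\rangle \bmod n$. Both are true (compare $n>q^{d-1}$ with $p^{ed/\ell}-1$, resp.\ $p^{ed/2}+1$), but they are genuine steps, not ``essentially immediate'' standard facts; note also that the graph automorphism inverts $\tau$ only up to conjugacy.

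For comparison, the paper's proof is uniform and avoids the classification-by-socle entirely: if $\langle\tilde\tau\rangle\le \mathrm{Aut}(S)$ properly contained $\langle\tau\rangle$, then $\Gamma:=\langle S,\tilde\tau\rangle$ would act faithfully, transitively and \emph{imprimitively} on the cosets of a point stabilizer of $\langle S,\tau\rangle$, with $\langle\tilde\tau\rangle$ as a transitive cyclic subgroup, contradicting Theorem \ref{thm:praeger}. Your centralizer route can be completed, but it amounts to re-proving case by case what Theorem \ref{thm:praeger} already delivers in one stroke; if you keep your approach, fix the $(2,8)$ case and supply the normalizer/order arguments above.
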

\begin{proof}
If $\langle\tilde{\tau}\rangle\le Aut(S)$ were a proper cyclic overgroup of $\langle \tau\rangle$, then $\Gamma:=\langle S,\tilde{\tau}\rangle$ would necessarily be a proper overgroup of $\langle S,\tau\rangle$. In particular, $\Gamma$ would act imprimitively on cosets of a point stabilizer in $\langle S,\tilde{\tau}\rangle$. Furthermore, $\langle\tilde{\tau}\rangle$ would act transitively. 
 But by Theorem \ref{thm:praeger}, an imprimitive almost-simple group cannot contain a transitive cyclic subgroup.\end{proof}

We furthermore need the following result on the structure of the block kernel of certain imprimitive groups. 
It is a special case of Lemma 3.1 in \cite{KN}.
\begin{lemma}\label{carmel}
Let $U\le S_s$ and $V\le S_r$ be transitive groups and let $G\leq U\wr V (\le S_{rs})$ be a subgroup whose natural projection to $V$ is onto. Furthermore, denote by $\Delta$ the block of $1$ in the given imprimitive action of $G$, and assume that the image of the block stabilizer $G_{\Delta}$ in its action on $\Delta$ is all of $U$.\footnote{Due to the transitivity of $V$, the images of all block stabilizers on the respective blocks are isomorphic, whence the choice of block here is arbitrary.} 
Assume furthermore that $U$ is almost simple with nonabelian simple socle $L$ and that 
 $K:=G\cap U^r$ is nontrivial.
Then there exists some partition $O_1,\ldots,O_k$ of $\{1,\dots, r\}$ which is preserved by $G$ and such that
$K\cap L^{O_j}\cong L$, $j=1,\ldots,k$ and   $$\soc(K) = K\cap \soc(U)^r  = (K\cap L^{O_1})\times \cdots \times (K\cap L^{O_k}).$$
Here $L^{O_j}$ denotes the subgroup of $x\in L^r$ whose component entries are the identity for all indices outside of $O_j$.
\end{lemma}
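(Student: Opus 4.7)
The plan is to first understand the coordinate projections of $K$ into $U$, then produce a nontrivial subgroup of $K$ lying in $L^r$ and decompose it via the subdirect-product structure theorem for a nonabelian simple group, and finally identify this subgroup with $\soc(K)$.

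First I would verify that each coordinate projection $\pi_i\colon U^r\to U$, restricted to $K$, has image containing $L$. Since $G_\Delta$ projects onto $U$ in its action on $\Delta$, any $u\in U$ lifts to some $g\in G_\Delta$ with $\pi_1(g)=u$; since $K\trianglelefteq G$, conjugation by such a $g$ normalizes $\pi_1(K)$, which is therefore a nontrivial normal subgroup of the almost simple group $U$ and hence contains $L$. By the transitivity of $V$ together with the footnote in the statement, $\pi_i(K)\supseteq L$ for every $i$.

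Next I would produce nontrivial elements in $K\cap L^r$. Invoking the Schreier conjecture, $U/L\le \textrm{Out}(L)$ is solvable, of some derived length $d$, so $(U^r)^{(d)}=(U^{(d)})^r\le L^r$ and thus $K^{(d)}\le K\cap L^r$. Since $L$ is perfect, $\pi_1(K^{(d)})=\pi_1(K)^{(d)}\supseteq L^{(d)}=L$, so $K^{(d)}$ is nontrivial. The projection $\pi_i(K\cap L^r)$ is a normal subgroup of $\pi_i(K)\supseteq L$ contained in $L$, hence either trivial or all of $L$; the set of coordinates where it equals $L$ is $G$-invariant and nonempty, so equals $\{1,\ldots,r\}$ by the transitivity of $V$. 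Thus $K\cap L^r$ is a subdirect product of $L^r$. The structure theorem for subdirect products of copies of a nonabelian simple group then yields a partition $O_1,\ldots,O_k$ of $\{1,\ldots,r\}$ and diagonal subgroups $D_j\le L^{O_j}$ with $D_j\cong L$ and $K\cap L^r=D_1\times\cdots\times D_k$; moreover $D_j=K\cap L^{O_j}$. This partition is canonical, hence preserved by the conjugation action of $G$; and since $K$ projects trivially to $V$, it does not permute coordinates, so each $K\cap L^{O_j}$ is itself a (minimal) normal subgroup of $K$.

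Finally, to conclude $\soc(K)=K\cap L^r$, I would show that every minimal normal subgroup $M$ of $K$ is contained in $K\cap L^r$. If $M$ is a direct product of nonabelian simple groups, then solvability of $U^r/L^r$ forces the composition $M\hookrightarrow U^r\twoheadrightarrow (U/L)^r$ to be trivial, giving $M\le L^r$. If instead $M$ is elementary abelian, then either $M\le K\cap L^r$, forcing $M=1$ (since a direct product of nonabelian simple groups has no nontrivial abelian normal subgroup), or $M\cap(K\cap L^r)=1$, whence $[M,K\cap L^r]\le M\cap(K\cap L^r)=1$ and $M$ centralizes $K\cap L^r$; but $C_{U^r}(K\cap L^r)=1$, since centralizing each diagonal $L$-summand forces each coordinate to lie in $C_U(L)=1$ (using that $U\le \textrm{Aut}(L)$ acts faithfully on $L$). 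I expect this centralizer computation, together with the correct invocation of the subdirect-product structure theorem, to be the main technical ingredient; the rest is essentially bookkeeping.
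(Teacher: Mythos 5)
Your argument is correct, but it is necessarily a different route from the paper's, because the paper does not prove Lemma \ref{carmel} at all: it simply invokes it as a special case of Lemma 3.1 of \cite{KN}. Your proposal is a self-contained proof of the special case needed here, and its ingredients are the expected ones: (i) normality of $\pi_i(K)$ in $U$ via lifting elements of $U$ to the block stabilizer, forcing $\pi_i(K)\supseteq L$; (ii) the Schreier conjecture to push a nontrivial part of $K$ into $L^r$ (acceptable here, since the surrounding results of the paper already rest on the classification); (iii) the standard structure theorem for subdirect subgroups of a product of nonabelian simple groups to get the full diagonal factors $D_j=K\cap L^{O_j}$ on the parts of a partition; and (iv) the identification $\soc(K)=K\cap L^r$ via perfectness in the nonabelian case and the computation $C_{U^r}(K\cap L^r)=1$ (using $C_U(L)=1$) in the abelian case. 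Two points are stated more tersely than they deserve but are easily repaired: the nontriviality of $\pi_1(K)$ should be made explicit (some $\pi_i(K)\neq 1$ since $K\neq 1$, and transitivity of $V$ together with $K\trianglelefteq G$ transports this to every coordinate), and ``the partition is canonical'' should be unpacked as: $K\cap L^r\trianglelefteq G$, its minimal normal subgroups are exactly the $D_j$, conjugation by $g\in G$ permutes them while mapping supports by the image of $g$ in $V$, so $G$ permutes the $O_j$. What the citation to \cite{KN} buys is a more general statement (and no reproved machinery); what your argument buys is a short, self-contained verification of exactly the case used in the paper, with the dependence on the Schreier conjecture and on the subdirect-product decomposition made explicit.
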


\subsection{Monodromy groups of polynomials}
To derive Corollary \ref{cor:manyfibers} from Theorem \ref{thm:main}, we need certain facts about monodromy groups of indecomposable polynomials. To this aim, we recall the full classification of such groups achieved by M\"uller in \cite{Mue}. Even though this will not be used in full here, it should serve as orientation to put the results obtained here into context.

%
%

\begin{theorem}
\label{thm:monclass}
Let $\K\subseteq \mathbb{C}$, and let $f\in \K[X]$ be an indecomposable polynomial\footnote{Differences in wording compared to the Main Theorem of \cite{Mue} are mere technicalities; in particular, it is well known that for the case of polynomials, indecomposability over $\K$ and over $\overline{\K}$ are equivalent, and furthermore the arithmetic monodromy group $A$ is contained in the symmetric normalizer of the geometric monodromy group $\textrm{Gal}(f(X)-t/\mathbb{C}(t))$.} of degree $n$. Let $A:=\textrm{Mon}_f:=\textrm{Gal}(f(X)-t/\K(t))$ be the (arithmetic) monodromy group of $f$. Then one of the following holds:
\begin{itemize}
\item[1)] $C_p\le A\le AGL_1(p)$ for a prime $p$; and in this case, $f$ is linearly related (i.e., equal up to composition with linear polynomials) to $X^p$ or a $p$-th degree Dickson polynomial.
\item[2)] $A\in \{A_n, S_n\}$.
\item[3)] $A$ one of $PGL_2(5)$, $PSL_3(2)$, $PGL_2(7)$, $P\Gamma L_2(8)$, $P\Gamma L_2(9)$, $PSL_2(11)$, $M_{11}$, $PSL_3(3)$, $PSL_4(2)$, $P\Gamma L_3(4)$, $M_{23}$, $PSL_5(2)$, acting of degree $6,7,8,9,10,11,11,13,15,21,23$ and $31$ respectively. For these exceptional cases, all possible polynomials are known explicitly.
\end{itemize}
\end{theorem}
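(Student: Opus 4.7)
The plan is to proceed in two phases: a purely group-theoretic reduction of the possibilities for the monodromy group $A$, followed by a ramification-theoretic analysis ruling out most candidates and identifying the polynomial in each surviving case.

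For the reduction, I would exploit that the cover $f : \mathbb{P}^1 \to \mathbb{P}^1$, $x \mapsto f(x)$, is totally ramified over $\infty$, so an inertia generator at $\infty$ acts as an $n$-cycle on the roots of $f(X) - t$; equivalently, $A$ contains an $n$-cycle. Moreover, by L\"uroth's theorem the decompositions of $f$ over $\overline{\K}$ correspond bijectively to the intermediate groups between a point stabilizer and $A$, so indecomposability of $f$ is equivalent to primitivity of $A$. Therefore Theorem \ref{thm:class} immediately restricts $A$ to the affine groups $C_p \le A \le AGL_1(p)$, the group $S_4$, the alternating/symmetric $A_n, S_n$, the projective semi-linear groups $PGL_d(q) \le A \le P\Gamma L_d(q)$, or one of the Mathieu/$PSL_2(11)$ cases.

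The arithmetic tool for the second phase is Riemann--Hurwitz applied to $f$: since source and target both have genus $0$ and the inertia at $\infty$ already contributes $n-1$ to the degree of the ramification divisor, the inertia generators $\sigma_1, \dots, \sigma_r$ over the finite branch points must satisfy the genus-zero identity $\sum_{i=1}^r (n - c(\sigma_i)) = n - 1$, where $c(\sigma)$ denotes the number of cycles of $\sigma$ in the degree-$n$ action. This forces $(\sigma_1, \dots, \sigma_r)$ to be extremely ``small", and in particular rules out most cases of (2b): using the explicit cycle structure of elements of $PGL_d(q) \le A \le P\Gamma L_d(q)$ on $\mathbb{P}^{d-1}(\mathbb{F}_q)$ via fixed-point counts on subspaces, one shows that the genus-zero identity can only be satisfied for finitely many pairs $(d,q)$; a finite check then leaves exactly the entries listed in case (3). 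Similar, shorter analyses handle case (2c) and $S_4$ (the latter being absorbed into case (2) since $n = 4$).

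It remains to identify the polynomials in each surviving case. Case (2) imposes no further restriction. In case (1a), the genus-zero constraint allows at most two finite branch points, each with inertia of shape $(p)$ or $(2,\dots,2,1)$; the resulting finitely many ramification configurations are rigid, and a direct comparison with the known ramification of $X^p$ (concentrated at one finite point) and of the Dickson polynomial $D_{p,\alpha}$ (two finite branch points each with involution-type inertia) identifies $f$ up to left-right composition with linears. In case (3), for each of the twelve listed almost simple groups one exhibits an explicit polynomial realizing the (essentially unique) admissible genus-zero system; this is the main computational content of \cite{Mue} and relies on rigidity plus explicit descent from $\mathbb{C}$ to a number field. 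The main obstacle is precisely the case-by-case elimination in (2b), which requires careful handling of conjugacy classes and cycle structures in projective semi-linear groups and a substantial amount of bookkeeping, together with the explicit construction of the exceptional polynomials in case (3).
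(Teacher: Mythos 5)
The paper does not actually prove this statement: Theorem \ref{thm:monclass} is quoted (with only cosmetic rewording) from M\"uller's classification \cite{Mue}, so there is no internal proof to compare yours against. Your outline does reproduce the standard architecture of M\"uller's argument: total ramification over $\infty$ puts an $n$-cycle in the monodromy, indecomposability gives primitivity via the Galois/L\"uroth correspondence, the CFSG-based classification of primitive groups containing an $n$-cycle (Theorem \ref{thm:class}) narrows the candidates, and the genus-zero Riemann--Hurwitz relation on the branch cycles eliminates all but finitely many $PSL_d(q)$-cases and pins down the ramification data in the affine case (your analysis that the finite branch data must be one totally ramified point or two points of type $(2,\dots,2,1)$ is correct).

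Two caveats. First, you blur the geometric/arithmetic distinction: the branch-cycle and Riemann--Hurwitz analysis lives in the geometric monodromy group $G=\mathrm{Gal}(f(X)-t/\overline{\K}(t))$, not in $A$, while you apply Theorem \ref{thm:class} to $A$ and then treat the inertia generators as if they constrained $A$ directly. The correct bookkeeping is to classify $G$ (primitive, since indecomposability over $\K$ and $\overline{\K}$ coincide for polynomials, and containing the $n$-cycle) and then to bound $A$ inside the normalizer of $G$ in $S_n$; this is precisely where entries of case 3) such as $PGL_2(5)$, $P\Gamma L_2(8)$, $P\Gamma L_2(9)$, $P\Gamma L_3(4)$ arise as arithmetic groups strictly larger than the geometric ones, and where in case 1) one needs $G\in\{C_p, D_p\}$ plus a (weak) rigidity and descent argument to conclude linear relatedness to $X^p$ or $D_{p,\alpha}$ over $\K$ rather than over $\mathbb{C}$. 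Second, as you yourself acknowledge, the substantive content -- the fixed-point estimates eliminating almost all $(d,q)$, and the construction/identification of the polynomials in cases 1) and 3) -- is delegated to \cite{Mue}. So what you have is a correct roadmap of the known proof rather than a self-contained one; given that the paper itself simply cites \cite{Mue} for this result, that is an acceptable level of detail, but it should be presented as such.
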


\section{Proof of the main results}

Theorem \ref{thm:main} will be deduced from the following purely group-theoretical theorem, which should also be interesting on its own.
\begin{theorem}
\label{thm:ncycleprop}
Let $G\le S_n$ be a finite transitive permutation group, and let $G=:U_0 > U_1 > \dots > U_m$, $m\in \mathbb{N}$ be a chain of maximal subgroups from $G$ to a point stabilizer $U_m$. For $i=1,\dots, m$, let $\Gamma_i$ be the image of $U_{i-1}$ acting on cosets of $U_i$. Let $\pi(G)$ be the proportion of $n$-cycles among all elements of $G$. Then
$$\pi(G) \le \frac{\varphi(n)}{n\cdot 2^d},$$
where $\varphi$ is the Euler totient function, and $d$ denotes the number of $i\in \{1,\dots, m\}$ for which $\Gamma_i$ does not fulfill $C_p\le \Gamma_i\le AGL_1(p)$ for any prime $p$.
\end{theorem}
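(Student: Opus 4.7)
The plan is to prove Theorem~\ref{thm:ncycleprop} by induction on the chain length $m$, peeling off one level at a time and combining the classification of primitive groups containing an $n$-cycle with a clean multiplicative reduction.

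For the base case $m=1$, $G$ is primitive of degree $n$, and I would invoke Theorem~\ref{thm:class} to enumerate the possibilities. One verifies the bound case by case by direct counting: in Case~1a, the $p$-cycles are exactly the $p-1$ nontrivial elements of the unique Sylow $p$-subgroup, giving $\pi(G)\le(p-1)/p=\varphi(p)/p$; in Case~1b the identity $\pi(S_4)=1/4=\varphi(4)/(4\cdot 2)$ matches the bound with equality; in Case~2a the standard counts $\pi(S_n)=1/n$ and $\pi(A_n)=2/n$ (for odd $n$) easily give $\pi(G)\le\varphi(n)/(2n)$ for $n\ge 5$; in Case~2b the classical count of Singer cycles yields $\pi(PGL_d(q))=\varphi(n)/(nd)\le\varphi(n)/(2n)$ since $d\ge 2$ (with the same bound inherited by overgroups up to $P\Gamma L_d(q)$); and Case~2c is handled by a finite check on the short list of sporadic examples.

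For the inductive step I aim at the multiplicative sub-lemma
\[
\pi(G)\le \pi(\Gamma_1)\cdot \pi(U_1),
\]
where $\pi(U_1)$ denotes the proportion of $s_1$-cycles in the action of $U_1$ on its block $B_1$ of size $s_1:=n/d_1$. Granting this, applying the base case to the primitive group $\Gamma_1$ and the inductive hypothesis to the shorter chain $U_1>U_2>\cdots>U_m$ (of length $m-1$ inside $U_1$) gives $\pi(\Gamma_1)\le\varphi(d_1)/(d_1\cdot 2^{\epsilon_1})$ and $\pi(U_1)\le\varphi(s_1)/(s_1\cdot 2^{d-\epsilon_1})$, where $\epsilon_1\in\{0,1\}$ records whether $\Gamma_1$ is of $AGL_1$-type. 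Multiplying these and invoking the elementary inequality $\varphi(d_1)\varphi(s_1)\le\varphi(d_1 s_1)=\varphi(n)$ closes the induction.

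The heart of the proof, and the step I expect to be the main obstacle, is the sub-lemma. An $n$-cycle $g\in G$ is characterized by the two conditions: $\pi_1(g)$ is a $d_1$-cycle in $\Gamma_1$, and $g^{d_1}$ (which automatically lies in the block kernel $K$) acts on $B_1$ as an $s_1$-cycle. For a fixed $d_1$-cycle $\bar g\in\Gamma_1$ I would parametrize the fiber $\pi_1^{-1}(\bar g)=g_0 K$ by $k\in K$ and, writing each $k$ in ``tilde coordinates'' $(\tilde k_1,\ldots,\tilde k_{d_1})\in\Lambda^{d_1}$ (with $\Lambda:=K|_{B_1}\trianglelefteq\Lambda_1:=U_1|_{B_1}$, obtained by transporting each block component back to $B_1$ via the iterates of $g_0$), compute $(g_0 k)^{d_1}|_{B_1}=\beta\cdot \tilde k_{d_1}\tilde k_{d_1-1}\cdots\tilde k_1$ for a fixed $\beta\in\Lambda$. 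This map is equivariant under the natural cyclic shift on the tilde coordinates (up to conjugation by the boundary twist), and combining this equivariance with the structural input from Lemma~\ref{carmel} (in the almost-simple case for $\Lambda_1$) shows that the number of $k\in K$ making the product an $s_1$-cycle is bounded by $|K|\cdot\pi(U_1)$; summing over the $d_1$-cycles $\bar g$ yields the sub-lemma. The delicate point, and where Lemma~\ref{carmel} is essential, is passing from the naive bound in terms of $\Lambda$ (over which the cycle-product distribution is a priori supported) to a bound in terms of $\Lambda_1$, since $\Lambda$ may be a proper subgroup of $\Lambda_1$ with a different density of $s_1$-cycles.
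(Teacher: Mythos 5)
Your base case is fine, but the inductive step rests on the multiplicative sub-lemma $\pi(G)\le \pi(\Gamma_1)\cdot\pi(U_1)$, and this sub-lemma is false. Take $G=C_4\le S_4$ (or $C_{p^2}\le S_{p^2}$ for any prime $p$): here $d_1=s_1=2$, $\pi(\Gamma_1)=\pi(U_1)=\tfrac12$, yet $\pi(G)=\tfrac12>\tfrac14$. The failure is visible in your own fiber description: the fiber over a $d_1$-cycle $\bar g$ is a coset of the block kernel $K$, and the proportion of $n$-cycles in that single coset can be as large as $1$ (in $C_4$, both elements of the nontrivial fiber are $4$-cycles), so it cannot in general be bounded by $\pi(U_1)$, the \emph{global} proportion of $s_1$-cycles in $\Lambda_1$. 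There is also a structural reason your route cannot be repaired as stated: granting the sub-lemma, your induction proves the strictly stronger bound $\varphi(d_1)\varphi(s_1)/(n\cdot 2^d)$, which is strictly smaller than $\varphi(n)/(n\cdot 2^d)$ whenever $\gcd(d_1,s_1)>1$; but the theorem's bound is attained with equality in exactly such cases (e.g.\ $C_{p^2}$, and more generally iterated wreath products of $C_p$'s), so the strengthening is false. This is precisely why the paper does not use the full Euler factor $\varphi(s)/s$ at each level: its key coset estimate (Proposition \ref{prop:cruc}) only yields $\tfrac12\prod_{p\mid s,\ (p,r)=1}\tfrac{p-1}{p}$ in the nonsolvable case, i.e.\ it deliberately drops the Euler factors at primes shared with the block count, and in the affine case it settles for the trivial bound $1$ when $p\mid r$ and the kernel is elementary abelian; the induction then closes because $\varphi(rs)/(rs)=\bigl(\varphi(r)/r\bigr)\prod_{p\mid s,\ (p,r)=1}\tfrac{p-1}{p}$, not because $\varphi$ is multiplicative.

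A secondary problem is your intended use of Lemma \ref{carmel}. That lemma requires the group acting \emph{within} the blocks to be almost simple. In your top-down decomposition the within-block group $\Lambda_1=U_1|_{B_1}$ carries the whole remaining chain $\Gamma_2,\dots,\Gamma_m$ and is typically a highly imprimitive, far-from-simple group, so the lemma does not apply where you need it. The paper avoids this by peeling off the \emph{bottom} maximal step instead: it writes $G\le H\wr U$ with $H=\Gamma_m$ primitive on the small blocks (hence almost simple when nonsolvable, by Theorem \ref{thm:class}), applies Lemma \ref{carmel} to the kernel $K\le H^r$ to control $\mathrm{soc}(K)$, and recurses on the block action $U$. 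If you want to keep a level-by-level induction, you must both reverse the direction of the peeling (or otherwise get an almost simple group inside the blocks) and replace your sub-lemma by a coset estimate of the weaker, $r$-dependent form above, together with the affine-case dichotomy (coprimality or non-elementary-abelian kernel versus the trivial bound), which is the actual content of Proposition \ref{prop:cruc}.
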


Theorem \ref{thm:ncycleprop} improves upon \cite[Theorem 1.5]{Muz}, which established that a finite permutation group of degree $n$ contains at most $\varphi(n)$ conjugacy classes of $n$-cycles, or equivalently $\pi(G)\le \frac{\varphi(n)}{n}$. Our strengthening allows to deduce density-$0$ results in the limit $d\to \infty$ and therefore leads naturally to applications in arithmetic dynamics.

Before proceeding to the proof, we fix some notation. Let $U\le S_r$ be transitive and $H\le S_s$ be primitive. Set $n:=rs$, and let $G\le H\wr U (=H^r\rtimes U) \le S_n$ be a transitive subgroup of the (imprimitive) wreath product. Let $K\trianglelefteq G$ be the kernel under projection to $U$, i.e., $K=G\cap H^r$. Assume that the projection $G\to U$ is onto, and that the image of a block stabilizer in its action on this block is all of $H$. 
Let $\tau\in G$ be an $n$-cycle. Obviously, the image of $\tau$ in $U$ (i.e., the image of $\tau$ in the action on the blocks) must be a transitive cyclic subgroup of $U$, i.e., an $r$-cycle. Hence, in order to estimate the proportion of $n$-cycles in $G$, it is sufficient to estimate the proportion in each coset $K\tau$ of a fixed $n$-cycle $\tau$, and thus assume that $G$ itself is of the form $K\langle\tau\rangle$ (and thus, $U=C_r$).
Denote the proportion of $n$-cycles among all elements of a given subset $M\subset G\le S_n$ by $\pi(M)$. We will show the following, with the emphasis being of course on Conclusion 2):

\begin{proposition}
\label{prop:cruc}
If, with the above notation,
 $\tau\in G$ is an $n$-cycle, then the following hold. \begin{itemize}
  \item[1)] If $s=p$ is a prime and $C_p\le H\le AGL_1(p)$, then\\ $\pi(K\tau)\le \begin{cases}\frac{p-1}{p}, \text{ if } (p,r)=1 \text{ or } $K$ \text{ is not elementary-abelian},\\ 1, \text{ otherwise}\end{cases}$, 
 \item[2)] In any other case, $\pi(K\tau) \le  \frac{1}{2}\cdot \prod_{p|s, (p,r)=1} \frac{p-1}{p}$.
\end{itemize}
\end{proposition}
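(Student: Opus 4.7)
The central object is the evaluation map $\Phi\colon K \to H$ sending $k \in K$ to the permutation induced by $(k\tau)^r$ on a fixed block of size $s$. Writing $\tau = (h_1,\dots,h_r;\sigma)$ with $\sigma$ an $r$-cycle and $k = (k_1,\dots,k_r;1)\in K$, $\Phi(k)$ is the product of the $k_ih_i$ taken in the order dictated by $\sigma$. The defining tautology is: $k\tau$ is an $n$-cycle in $G$ iff $\Phi(k)$ is an $s$-cycle in $H$; in particular, $\Phi(1_K)$, the action of $\tau^r$ on block $1$, is itself an $s$-cycle by the hypothesis on $\tau$. Both parts of the proposition therefore reduce to bounding the proportion of $k \in K$ for which $\Phi(k)$ is an $s$-cycle in $H$.

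For Case~1, with $C_p \le H \le AGL_1(p)$ and $s=p$, the $p$-cycles in $H$ are precisely the non-identity elements of the normal Sylow subgroup $C_p$. Since $H/C_p$ is abelian and $\Phi(1_K)\in C_p$, the composition $\overline\Phi\colon K \to H/C_p$ is a group homomorphism (non-commutative twists in the product die in the abelian quotient). If $\overline\Phi$ is non-trivial, its kernel has index $\ge 2$ in $K$, yielding $\pi(K\tau) \le 1/2 \le (p-1)/p$. If $\overline\Phi$ is trivial, then $\Phi$ takes values in $C_p$, and a direct expansion of the product shows $\Phi(k) = \Phi(1_K) + \Psi(k)$ with $\Psi\colon K \to C_p$ a group homomorphism (the conjugation action of $H$ on $C_p$ now factors through the trivial quotient $H/C_p$). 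By simplicity of $C_p$, $\Psi$ is either surjective --- giving $\pi(K\tau) = (p-1)/p$ exactly --- or trivial, in which case $\pi(K\tau) = 1$. The trivial subcase is shown to force both $p \mid r$ and $K$ elementary abelian: $\tau^r\in K$ equals the diagonal $(\Phi(1_K),\dots,\Phi(1_K))\in C_p^r$, so $\Psi(\tau^r) = r\cdot\Phi(1_K)$ vanishes only when $p \mid r$; and if $K$ contained any element with some component outside $C_p$, the non-commutativity of $H$ would force $\Psi$ to take a non-zero value on it, contradicting $\Psi\equiv 0$, so $K \le C_p^r$.

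For Case~2, where $H$ is primitive but not affine, Theorems~\ref{thm:class} and~\ref{thm:praeger} leave only $H = S_4$ or $H$ almost simple with non-abelian socle $L$. In the almost-simple case, Lemma~\ref{carmel} applies with $H$ in the role of its ``$U$'' and our $C_r$ in the role of its ``$V$''; since $C_r$ acts as a single $r$-cycle via $\tau$, the $G$-invariant partition of $\{1,\dots,r\}$ produced by the lemma must be trivial, whence $\soc(K) = K\cap L^r$ is a diagonal copy of $L$. Using this rigid diagonal structure, the bound on $\pi(K\tau)$ is reduced to a uniform bound on the proportion of $s$-cycles in $H$ itself. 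A case-by-case check using Theorem~\ref{thm:class} shows that for every primitive non-affine $H \le S_s$ containing an $s$-cycle, this proportion is at most $\frac{1}{2}\varphi(s)/s = \frac{1}{2}\prod_{p\mid s}(p-1)/p$, which is in turn at most $\frac{1}{2}\prod_{p\mid s,\,(p,r)=1}(p-1)/p$ (the restriction $(p,r)=1$ only drops factors $\le 1$). The case $H = S_4$ is verified by direct enumeration.

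\textbf{Main obstacle.} The decisive step is justifying the reduction ``$\pi(K\tau) \le $ proportion of $s$-cycles in $H$'' in Case~2: the map $\Phi$ is not a homomorphism and its image need not be a subgroup, so this requires showing that $\Phi$ is sufficiently equidistributed over the $s$-cycles of $H$. The diagonal structure of $\soc(K)$ supplied by Lemma~\ref{carmel} is crucial here, as it allows one to average over a large, well-understood subgroup of $K$ and collapse the counting problem to $H$. The uniform upper bound $\frac{1}{2}\varphi(s)/s$ on the proportion of $s$-cycles in non-affine primitive groups with a full cycle is a classical style of calculation, but must be carried out uniformly across the classification in Theorem~\ref{thm:class}.
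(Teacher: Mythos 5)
Your Case 1 argument breaks at the step where you pass from ``$\overline\Phi$ trivial'' to ``$\Phi(k)=\Phi(1_K)+\Psi(k)$ with $\Psi$ a homomorphism''. The components $k_i$ of an element of $K$ need not lie in $C_p$ (only their ordered product does), so the conjugation twists relating $\prod_i k_ik_i'$ to $\prod_i k_i\cdot\prod_i k_i'$ do not disappear; the parenthetical ``the conjugation action of $H$ on $C_p$ factors through the trivial quotient'' is not correct, since that action is faithful on $H/C_p$. Concretely, take $p=3$, $H=S_3=AGL_1(3)$, $r=2$, $\tau=(1,2,\dots,6)$, and $G=K\langle\tau\rangle$ with $K=\{(x,y)\in S_3^2:\ \mathrm{sgn}(x)=\mathrm{sgn}(y)\}$: all standing hypotheses hold, $\overline\Phi$ is trivial, yet for $\sigma=((12),(12))$ and $\sigma'=((13),(13))$ one has $\Psi(\sigma)=\Psi(\sigma')=1$ while $\Psi(\sigma\sigma')\neq 1$, so $\Psi$ is not a homomorphism. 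The companion claim that $\Psi\equiv 0$ forces $K\le C_p^r$ is likewise unsupported. The paper avoids this entirely: it produces non-full-cycle elements of $K_0\tau$ by complementation (Schur--Zassenhaus when $(p,r)=1$; Higman's complementation theorem, using that $K$ is $p$-perfect, when $p\mid r$ and $K$ is not elementary abelian) and then counts conjugacy classes of $n$-cycles per $K_0$-coset.

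Case 2 has a more serious problem: the reduction ``$\pi(K\tau)\le$ proportion of $s$-cycles in $H$'', and hence your $r$-independent bound $\tfrac12\varphi(s)/s$, is not just the ``main obstacle'' you flag --- it is false. Take $s=6$, $r=2$, $H=S_6$, $\tau=(1,2,\dots,12)$, $G=A_6^2\langle\tau\rangle$; then $K=\{(x,y):\mathrm{sgn}(x)=\mathrm{sgn}(y)\}$, and for $\sigma\in K$ the product $\sigma_1\sigma_2\rho$ is uniformly distributed over the odd coset of $A_6$, so $\pi(K\tau)=\tfrac{120}{360}=\tfrac13$, whereas the proportion of $6$-cycles in $S_6$ is $\tfrac16=\tfrac12\varphi(6)/6$. (This matches the proposition's bound $\tfrac12\cdot\tfrac23=\tfrac13$, because $2\mid r$ removes the factor $\tfrac12$ from the product --- the $r$-dependence in the statement is essential, not cosmetic.) The correct quantity is the maximal number of $s$-cycles in a \emph{single coset of the socle} inside $\mathrm{Aut}(\soc(H))\cap S_s$ (the paper's $\beta$-estimate, e.g.\ $2/s$ for $A_s$, larger than $1/s$), and the factors $\tfrac{p-1}{p}$ are recovered only for primes $p\nmid r$ via sign/abelianization homomorphisms on $K$ whose surjectivity uses $\psi(\tau^r)=\overline{\rho}^r$ (the $\alpha$-estimate). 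Relatedly, your appeal to Lemma~\ref{carmel} is misapplied: a regular cyclic action on $\{1,\dots,r\}$ admits a block system for every divisor of $r$, so the partition need not be trivial and $\soc(K)\cong L^{r'}$ for some divisor $r'\mid r$ (in the example above it is $A_6\times A_6$, not a diagonal); the paper's analysis of the gluing automorphisms $\rho_i$ together with Corollary~\ref{cor:overgr} is exactly what controls the resulting twisted diagonals. Finally, $H=S_4$ cannot be settled ``by direct enumeration'': $r$ and $K\le S_4^r$ are unbounded, and the paper again needs Higman's theorem there.
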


\begin{proof}[Proof of Theorem \ref{thm:ncycleprop} assuming Proposition \ref{prop:cruc}]
With the notation from Theorem \ref{thm:ncycleprop}, let $U$ be the image of $G$ in the action on cosets of $U_{m-1}$ and $H=\Gamma_m$ the image of $U_{m-1}$ in the action on cosets of $U_m$. Then $G$ embeds into $H\wr U$ by the embedding theorem of Krasner and Kaloujnine, cf.\ \cite{KK}. Furthermore, $H=\Gamma_m$ is indeed a primitive group, since $U_{m}<U_{m-1}$ is a maximal subgroup. Next, let $K$ be the kernel of the projection from $G$ to $U$, and set $r=[G:U_{m-1}]$, $s=[U_{m-1}:U_m]$ and $n=rs$. 
Clearly, the proportion of $n$-cycles in $G$ is upper bounded by the proportion of $r$-cycles in $U$ multiplied by the maximum value $\pi(K\tau)$ where $\tau\in G$ is any given $n$-cycle. 

{\it Case 1}: Assume first that $C_p\le H\le AGL_1(p)$ for some prime $p$. Then inductively (the beginning $r=1$ being trivial), the proportion of $r$-cycles in $U$ is at most $\frac{\varphi(r)}{r\cdot 2^{d}}$, and since $\frac{\varphi(rs)}{rs} = \begin{cases}\frac{p-1}{p}\cdot \frac{\varphi(r)}{r}, \text{ if } (p,r)=1\\ \frac{\varphi(r)}{r}, \text{ otherwise}\end{cases}$, the assertion is immediate from part 1) of Proposition \ref{prop:cruc}.

{\it Case 2}: Now assume $H$ is any other group. Then, by induction and part 2) of Proposition \ref{prop:cruc}, the proportion of $n$-cycles in $G$ is at most $\frac{\varphi(r)}{2^{d-1}r} \cdot \frac{1}{2}\cdot \prod_{p|s, (p,r)=1} \frac{p-1}{p} = \frac{\varphi(rs)}{rs}\cdot \frac{1}{2^d}$.
\end{proof}

\begin{proof}[Proof of Theorem \ref{thm:main} assuming Theorem \ref{thm:ncycleprop}]
Let $F_j:=f_1\circ\dots\circ f_j$, and let $\K\subseteq \K_1\subseteq \K_2\subseteq\dots$ be a chain of fields such that $\K_j$ is generated over $\K$ by a root of $F_j$. Let $G_j:=\textrm{Gal}(F_j/\K)$.  
 Refine the chain $\K\subseteq \K_1\subseteq \K_2\subseteq \dots$ to a chain $\K\subset L_1\subset L_2\subset \dots$ without any proper intermediate fields, let $\Omega_j$ be the Galois closure of $L_j/\K$, and let $\tilde{G}_j = \textrm{Gal}(\Omega_j/\K)$ (viewed in the natural permutation action on cosets of $\textrm{Gal}(\Omega_j/L_j)$). 
Finally, let $\Gamma_j$ denote the Galois group of the Galois closure of $L_j/L_{j-1}$.

Now assume that the proportion of full cycles in $G_j$ (and hence, in $\tilde{G}_j$) does not converge to $0$ as $j\to \infty$. Then by Theorem \ref{thm:ncycleprop}, applied to $G:=\tilde{G}_j$ for each $j$, there are only finitely many indices $i\in \mathbb{N}$ for which $\Gamma_i$ is not a subgroup of any group $AGL_1(p_i)$.  Let $i_0$ be the largest such index, and set $L:=\Omega_{i_0}$. Via the Galois correspondence and the Krasner-Kaloujnine theorem \cite{KK}, the inclusion $\K\subset L_{i_0}\subset L_j$ (for every $j\ge i_0$) yields an embedding of the group $\tilde{G}_j$ as a permutation group into the wreath product $H\wr \tilde{G}_{i_0}$, where $H$ denotes the Galois group of the Galois closure of $L_j/L_{i_0}$. The group $\textrm{Gal}(\Omega_j/L)$ is the kernel of the natural projection $\tilde{G}_j\to \tilde{G}_{i_0}$, and hence embeds into the direct power $H^{[L_{i_0}: \K]}$, which itself embeds (as an intransitive subgroup, with $[L_{i_0}:\K]$ orbits of length $[L_j:L_{i_0}]$) into $H\wr V$ for {\it any} transitive group $V$ of degree $[L_{i_0}: \K]$. Choose, for example, $V$ as a {\it cyclic} transitive group; then both $V$ and $H$ embed as permutation groups into iterated wreath products of suitable groups $AGL_1(p_i)$; for $H$, this is due to the fact that that the $\Gamma_i$, $i> i_0$ all embed into such a group.
In particular, it follows that every $\textrm{Gal}(F_j/L)$, $j\in \mathbb{N}$ embeds into an iterated wreath product of  groups $AGL_1(p_i)$, completing the proof. 
\end{proof}

We take some time to include a minor addition to the conclusion of Theorem \ref{thm:main}, showing that in the case where all $f_i$ are of bounded degree, the orders of the Galois groups of iterates $f_1\circ\dots\circ f_n$ are actually upper bounded by iterated wreath products of cyclic, not only of affine groups.
\begin{theorem}
\label{thm:main_add}
Let $\mathcal{S}:=(f_n)_{n\in \mathbb{N}}$ a sequence of polynomials of bounded degrees $d_i\le N$ ($i\in \mathbb{N}$) over a number field $\K$, and assume that the set of stable primes of $\mathcal{S}$ is of positive density, and let $G_n:=\textrm{Gal}(f_1\circ\dots\circ f_n/\K)$. Then $|G_n| \ll_n c(d_1\cdots d_n)$, where $c(n)$ denotes the maximal order of a transitive iterated wreath product of cyclic groups inside $S_n$.
\end{theorem}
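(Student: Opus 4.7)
The plan is to refine the proof of Theorem~\ref{thm:main} by tracking not only the affine condition on the primitive constituents $\Gamma_i$, but also the second structural input of Proposition~\ref{prop:cruc} case~1): the bound $\pi(K\tau) = 1$ can only be attained when $K$ is elementary abelian (and $p\mid r$). Concretely, I would take a maximal subgroup chain $G_n = U_0 > U_1 > \dots > U_m$ down to a point stabilizer, with primitive quotients $\Gamma_i$ of degrees $q_i = [U_{i-1}:U_i] \le N$, and set $r_i := [G_n:U_{i-1}]$. The successive block kernels $K_i$ (i.e., the quotients in the filtration of $G_n$ by kernels of the iterated block actions) embed into $\Gamma_i^{r_i}$, and $|G_n| = \prod_{i=1}^m |K_i|$. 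By Chebotarev's theorem, the density hypothesis becomes $\pi(G_n) \ge \delta > 0$ uniformly in $n$.

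The iterative application of Proposition~\ref{prop:cruc} underlying the proof of Theorem~\ref{thm:ncycleprop} yields $\pi(G_n) \le \prod_i \pi(K_i \tau_i)$. Calling a level $i$ \emph{exceptional} if $C_{p_i}\le \Gamma_i\le AGL_1(p_i)$ for a prime $p_i\le N$, $p_i\mid r_i$, and $K_i$ is elementary abelian, any non-exceptional level contributes a factor at most $\max\{(N-1)/N,\, 1/2\}<1$ to the product. So only boundedly many (at most $i_0 = i_0(\delta, N)$) levels can be non-exceptional; moreover the ``$(p_i, r_i) = 1$'' source of non-exceptionality occurs only at the first appearance of each of the boundedly many primes $p\le N$ in the chain, hence at boundedly many early levels determined by $f_1,\ldots,f_{j_0}$ for a fixed $j_0$. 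For each exceptional level $i > i_0$ the bound $|K_i|\le p_i^{r_i}$ holds, since the maximal elementary abelian subgroup of $AGL_1(p_i)^{r_i}$ is the translation subgroup $C_{p_i}^{r_i}$, while for $i\le i_0$ each $|K_i|$ is bounded by a constant independent of $n$. Hence $|G_n|\le C_0\cdot\prod_{i>i_0} p_i^{r_i}$ with $C_0$ a constant.

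To match this with a lower bound for $c(N_n)$, I would factor $q_1\cdots q_{i_0}$ into primes $s_1'\cdots s_\ell'$ and consider the transitive iterated cyclic wreath product in $S_{N_n}$ whose outer-to-inner sequence of constituents is $C_{s_1'},\ldots,C_{s_\ell'},C_{p_{i_0+1}},\ldots,C_{p_m}$. In any such product the exponent of the $j$-th constituent equals the product of the orders of its outer constituents; for the factor $C_{p_i}$ with $i > i_0$ this equals exactly $r_i = q_1\cdots q_{i-1}$. Hence $c(N_n) \ge \prod_{j\le\ell}(s_j')^{s_1'\cdots s_{j-1}'}\cdot\prod_{i>i_0} p_i^{r_i}$, and dividing yields $|G_n|/c(N_n)\le C_0/\prod_{j\le\ell}(s_j')^{s_1'\cdots s_{j-1}'}$, a constant independent of $n$, as claimed. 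The hard part I foresee is confirming that all non-exceptional levels lie among the first $i_0$ levels of the chain: the ``$(p_i,r_i)=1$'' type is ruled out deep in the chain by the pigeonhole above, but ruling out a ``$K_i$ not elementary abelian'' non-exceptional level from occurring at large depth likely requires additional polynomial-theoretic input, plausibly exploiting the rigid structure of the polynomials forced by positive density (cf.\ Question~\ref{ques:main}).
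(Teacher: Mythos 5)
Your overall strategy is the paper's: exploit that the trivial bound $\pi(K\tau)=1$ in part 1) of Proposition \ref{prop:cruc} is only available when $p\mid r$ \emph{and} $K$ is elementary abelian, conclude from positive density that all but finitely many levels are of this ``exceptional'' type, and then bound the exceptional kernels by $p_i^{r_i}$ to compare with an iterated cyclic wreath product (your explicit lower bound for $c(d_1\cdots d_n)$ is a spelled-out version of what the paper leaves implicit). However, the step you flag as the ``hard part'' is where your write-up has a genuine gap --- and the gap comes from your setup, not from any missing polynomial-theoretic input. You choose a maximal subgroup chain of $G_n$ \emph{for each $n$}; with incompatible chains, ``level $i$'' is not a well-defined object across different $n$, so your claim that $|K_i|$ is bounded independently of $n$ for $i\le i_0$ is unjustified, and you are then forced to worry about where the non-exceptional levels sit.

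The fix is to do what the proof of Theorem \ref{thm:main} does: fix, once and for all, the refined tower $\K\subset L_1\subset L_2\subset\dots$ coming from the sequence $\mathcal{S}$, so that every level $j$ has a kernel $K_j$, degree $q_j$ and index $r_j$ that do not depend on $n$, and $|G_n|=\prod_{j\le m(n)}|K_j|$. Positive density gives $\pi(G_n)\ge\delta$ for all $n$ by Chebotarev, and every non-exceptional level --- including an affine level with $p_j\mid r_j$ but $K_j$ not elementary abelian, which is exactly the case handled via Higman's theorem in Part 3b of the proof of Proposition \ref{prop:cruc} --- contributes a factor at most $\max\{\tfrac{N-1}{N},\tfrac12\}<1$. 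Hence the set $E$ of non-exceptional levels of the \emph{infinite} chain is finite; no control on \emph{where} these levels occur is needed, because each $K_j$ with $j\in E$ is a fixed finite group, so $\prod_{j\in E}|K_j|$ is a constant depending only on $\mathcal{S}$ (which is all the statement $|G_n|\ll_n c(d_1\cdots d_n)$ requires). Then
$$|G_n|\;\le\;\Bigl(\prod_{j\in E}|K_j|\Bigr)\cdot\prod_{j\le m(n)} q_j^{\,r_j}\;\le\;C\cdot c(d_1\cdots d_n),$$
since $C_{q_{m(n)}}\wr\dots\wr C_{q_1}$ is itself a transitive iterated wreath product of cyclic groups of degree $d_1\cdots d_n$. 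In particular, your suggestion that ruling out a deep non-elementary-abelian level ``requires additional polynomial-theoretic input'' is mistaken: finiteness of $E$, already guaranteed by Proposition \ref{prop:cruc}(1), suffices, and its location is irrelevant. (Your pigeonhole remark about first appearances of primes is also slightly off --- a prime $p\le N$ may first appear arbitrarily late in the chain --- but again only finiteness, not earliness, matters.)
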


\begin{remark}
An easy induction shows $c(n)\le 2^{n-1}$, with equality reached via the iterated wreath product of groups $C_2$ in the case where $n$ is a $2$-power.
\end{remark}
\begin{proof}[Proof of Theorem \ref{thm:main_add} assuming Proposition \ref{prop:cruc}]
For this, note that in Case 1 of the proof of Theorem \ref{thm:ncycleprop}, 
 we have made use of the bound
$\pi(K\tau)\le \frac{p-1}{p}$ from part 1) of Proposition \ref{prop:cruc} {\it only} in the case where $(p,r)=1$, and {\it not} in the case where $p$ divides $r$ and $K$ is not elementary-abelian. \footnote{Of course, a corresponding strengthening could be included in the wording of Theorem \ref{thm:ncycleprop}; we decided against this in favor of readability of the statement, and instead present the adaptation ad-hoc.} Note that under the assumption that all $f_i$ are of bounded degree, an infinite product of such factors $\frac{p-1}{p}$ would necessarily be zero. In order for the density of stable primes to be positive, it is therefore necessary that this case occurs only finitely many times along the chain $\K\subset L_1\subset L_2\subset \dots$ of intermediate fields as in the previous proof.
On the other hand, we already know that (in the notation of the previous proof) all but finitely many of the groups $\Gamma_j$ must embed into some $AGL_1(p_j)$; furthermore, $p_j$ necessarily divides $[L_{j-1}:\K]$ for all but finitely many $j$, again simply by finiteness of the set of available primes $p_j$. In total, we have that, in order for the set of stable primes to be of positive density, all but finitely $j\in \mathbb{N}$ must fulfill $C_{p_j}\le \Gamma_j\le AGL_1(p_j)$ \textit{and} $K_j:=\ker(\tilde{G}_j\rightarrow \tilde{G}_{j-1})$ is elementary-abelian (necessarily of exponent $p_j$). 
This shows that, up to a constant factor, the orders of all $\tilde{G}_n$ are bounded from above by the order of some iterated wreath product of cyclic groups inside the respective symmetric groups.
\end{proof}

We next turn to the proof of the crucial Proposition \ref{prop:cruc}. 
\begin{proof}[Proof of Proposition \ref{prop:cruc}]
{\it Part 1: General preparations:} 
Throughout, we take the convention of multiplying permutations from left to right. 
As above, let $G\le H\wr C_r \le S_{rs}$ with $H\le S_s$ primitive. Assume that $\tau \in G$ is an $n$-cycle ($n:=rs$). 
We denote the $r$ blocks of the imprimitive action of $G$ by $\Delta_1,\dots, \Delta_r$. Upon relabelling elements, we may and will assume $\Delta_i:=\{i+k\cdot r\mid k\in \{0,\dots s-1\}\}$ for each $i=1,\dots, r$. We additionally fix an ordering of the elements of each block from smallest to largest, so that later, when focussing on the action of the blocks kernel on on the $i$-th block, we may, e.g., identify the permutation $(i, i+r, \dots, i+(s-1)r)$ with the $s$-cycle $\rho:=(1,2,\dots, s)\in S_s$.
Furthermore, we assume that the given $n$-cycle $\tau$ is of the form $\tau= (1,2,\dots, n)$. This assumption is without loss, up to conjugation by a suitable permutation preserving the block system and thus in particular inducing an automorphism of the block kernel $K\triangleleft G$.

We take a moment to consider an important detail of the further strategy. In order to count $n$-cycles in $K\tau$, we will often argue iteratively using certain subgroups $K_0$ of $K$, i.e., first  estimate the number of cosets of $K_0$ contained in $K\tau$ which contain any $n$-cycle $\tau'$ at all, followed by estimating the proportion of $n$-cycles in this coset $K_0\tau'$. For sake of simplicity, we will, in the second part of such an argument, again assume the newly identified $n$-cycle $\tau'$ to be of the form $(1,2,\dots, n)$ chosen above for $\tau$. There will be no risk of ambiguity here, but it is important to note that this change always corresponds to applying an automorphism of $K$, and hence, in order to be sure that elements of $K\tau$ are counted exactly once in total, we have to be sure that $K_0$ is also invariant under this automorphism.

Proceeding with the proof, we now consider the block kernel $K$. Clearly $K$  is nontrivial, since indeed $\tau^r\in K$. 
We first claim that, for $\sigma = (\sigma_1,\dots, \sigma_r)\in K$, the element $\sigma\tau$ is an $n$-cycle if and only if 
$\sigma_1\cdots\sigma_r\cdot \rho\in S_s$ is an $s$-cycle. 

Indeed,  since $\langle\sigma\tau\rangle$ is transitive on the blocks, it is clear that $\sigma\tau$ is an $n$-cycle if and only if $(\sigma\tau)^r\in K$ acts as an $s$-cycle on one -- and hence on every -- block. But $(\sigma\tau)^r = \sigma \cdot \sigma^{\tau^{-1}}\cdot \sigma^{\tau^{-2}}\cdots \sigma^{\tau^{1-r}} \cdot \tau^r$, where $\sigma^x = x^{-1}\sigma x$ denotes the conjugate of $\sigma$ by $x$. Now the first entry of $\sigma^{\tau^{-k}}$ equals $\sigma_{1+k}$. Furthermore, the first entry of $\tau^r$  is simply $\rho$ by definition. In total, the first entry $[(\sigma\tau)^r]_1$ of $(\sigma\tau)^r$  equals 
\begin{equation}
\label{eq:product}[(\sigma\tau)^r]_1 = \sigma_1 \sigma_2\cdots\sigma_r\cdot \rho,
\end{equation} 
proving the claim.

{\it Part 2: The case of nonsolvable $H$}: 
 We now delay the case of solvable $H$ until later and assume for the moment that $H$ is nonsolvable. In particular, $H$ necessarily contains an $s$-cycle and is thus one of the groups in case 2) of Theorem \ref{thm:class}, and in particular is an almost simple group. Denote its socle by $S$. 
 
We now proceed to estimating the proportion of $n$-cycles in $K\tau$. 
As indicated above, we do this in two steps: 
upper-bounding (say, by a value $\alpha$) the proportion of cosets $K_0x$ of $K_0$ in $K$ for which $K_0x\tau$ contains any $n$-cycle $\tau'$ at all, where $K_0:=\soc(K)$; followed by upper-bounding (say, by a value $\beta$) the proportion of $n$-cycles in $K_0\tau'$. The total proportion of $n$-cycles in $K\tau$ is then upper bounded by $\alpha\beta$.

{\it Part 2a: The ``$\alpha$-estimate"}:
We will aim at the following upper bounds $\alpha$ for the proportion of cosets $K_0x$ of $K_0$ in $K$ for which $K_0 x\tau$ contains any $n$-cycle.
\begin{equation}
\label{eq:alpha}
\alpha = \begin{cases}
\frac{1}{2}, \text{ if } S=A_s \text{ with } s \text{ even and } r \text{ odd},
\\
\prod_{p|(d,q-1),  \ (p,r)=1} \frac{p-1}{p}, \text{ if } S=PSL_d(q),\\
1, \text{ otherwise.}
\end{cases}
\end{equation}
Of course for cases with $\alpha=1$, nothing needs to be proven. Thus, assume first that $S=A_s$ with $s$ even, and that $r$ is odd. In this case, $\tau^r$ is a disjoint product of an odd number of $s$-cycles in $K$. In particular, the homomorphism $\psi: K\to C_2$, $\sigma:=(\sigma_1,\dots, \sigma_r)\mapsto \textrm{sgn}(\sigma_1\cdots \sigma_r)$ is onto. On the other hand, for $\sigma\tau$ to be an $n$-cycle, it is necessary by Equation \eqref{eq:product} that $\sigma_1\cdots \sigma_r\cdot \rho$ is an $s$-cycle, and thus in particular that $\psi(\sigma) = +1$. This gives the upper bound $\alpha=\frac{1}{2}$ as claimed.

Assume next that $S=PSL_d(q)$ with $d\ge 2$. Set $K_1:=K\cap PGL_d(q)$. Note that $K_1/K_0$ is abelian since $PGL_d(q)/PSL_d(q)$ is cyclic (of order $(d,q-1)$). Here, we will reach our $\alpha$-value by counting over cosets $K_0x$ of $K_0$ inside $K_1\tau$.\footnote{I.e, we silently replace $\tau$ with an $n$-cycle in some given coset of $K_1$, before counting $n$-cycles in the individual $K_0$-cosets of this $K_1$-coset. This is fine since $K_1$ is a characteristic subgroup of $K$ due to $PGL_d(q)$ being characteristic in $P\Gamma L_d(q)$.}
 Set $\nu:=\prod_{p|(d,q-1),  \ (p,r)=1} \frac{p-1}{p}$, and note that, since $PGL_d(q)/PSL_d(q)$ is cyclic of order $(d,q-1)$, it has a unique quotient isomorphic to $C_{\nu}$.
 Consider the homomorphism $\psi:K_1\to C_{\nu}$ given by $(\sigma_1,\dots, \sigma_r)\mapsto \overline{\sigma_1\cdots \sigma_r}$, with the bar denoting the image in the quotient $C_{\nu}$. We may assume $(d,q)\ne (2,8)$, since for that case we only claim the trivial bound $\alpha=1$ anyway.  Then the only $s$-cycles in $P\Gamma L_d(q)$ are the Singer cycles of $PGL_d(q)$ (cf.\ \cite[Theorem 1]{Jones2}), and those in particular all generate the full quotient $PGL_d(q)/PSL_d(q)$ (of order $(d,q-1)$). In particular, we have $\tau^r\in K_1$, and furthermore $\psi(\tau^r)=\overline{\rho}^r$, which generates $C_{\nu}$ due to $(r,\nu)=1$. I.e., $\psi$ is surjective. But now, using again Equation \eqref{eq:product}, we see that for $\sigma\tau$ to be an $n$-cycle, it is necessary that $\psi(\sigma)\cdot \overline{\rho}$ generates $C_{\nu}$, which is possible only for $\varphi(\nu)$ values of $\psi(\sigma)$. Hence, we get the upper bound $\alpha=\frac{\varphi(\nu)}{\nu} = \prod_{p|(d,q-1),  \ (p,r)=1} \frac{p-1}{p}$, as claimed. 

{\it Part 2b: The ``$\beta$-estimate"}:
 Next, we bound the proportion of $n$-cycles in $K_0\tau'$, for a fixed $n$-cycle $\tau'$.  We claim that this proportion is bounded from above by a value $\beta$ as follows.
 \begin{equation}
 \label{eq:beta}
 \beta = \begin{cases}
 \frac{2}{s}, \text{ if } S=A_s, \text{ or } S \text{ is as in Case 2c) of Theorem \ref{thm:class}},\\
 \dfrac{\frac{\varphi(s)}{s}}{\frac{\varphi((d,q-1))}{(d,q-1)}}\cdot \frac{1}{d}, \text{ if } S=PSL_d(q) (\text{and } s=\frac{q^d-1}{q-1}).
 \end{cases}
 \end{equation}
 Since $K_0$ is a characteristic subgroup of $K$, we can without loss assume $\tau'=(1,2,\dots, n)$ again, i.e., continue working with $\tau$ instead of $\tau'$, as explained in Part 1. 
 Note that $K_0=\soc(K)$ is nontrivial, since $K$ is. By Lemma \ref{carmel}, 
we thus have $K_0\cong S^{r'}$ for some divisor $r'$ of $r$, and furthermore, there exists a partition of $\{1,\dots, r\}$ into $r'$ sets $M_1,\dots, M_{r'}$ of equal size $\frac{r}{r'}$, preserved by the blocks action of $G$, and such that for each of the $r'$ simple normal subgroups $S_{(i)} \cong S$ of $K_0$ ($i=1,\dots, r'$) the following holds: 
$$S_{(i)} = \textrm{soc}(K) \cap S^{M_i},$$
where $S^{M_i}$ denotes the subgroup of all $x\in S^r$ whose component entries are the identity for all indices outside of $M_i$. 
Due to the action of $\tau$, necessarily $M_i = \{i+kr' \mid k\in \{0, \frac{r}{r'}-1\}\}$. In particular, since $S_{(i)}$ is a simple group surjecting on each component with index contained in $M_i$, entries of an arbitrary element $\sigma:=(\sigma_1,\dots, \sigma_r)\in \soc(K) \le S^r$ in those components differ only by an automorphism of $S$, i.e., for all $i=1,\dots, r$, there exists $\rho_{i}\in Aut(S)$ such that $\sigma_{i+r'} = \sigma_i^{\rho_{i}}$ (with the addition in the index of course to be understood modulo $r$).

We can be more precise about these automorphisms by considering the action of $\tau$. 
Indeed, note that for $\sigma=(\sigma_1,\dots, \sigma_r)\in K$, we have $\sigma^\tau = (\sigma_r^\rho, \sigma_1, \sigma_2, \dots \sigma_{r-1})$, where $\rho:=(1,\dots, s)\in S_s$ (the occurrence of $\rho$ is due to the fact that $\tau$ maps the $j$-th element of the $i$-th block to the $j$-th element of the $i+1$-th block for $i=1,\dots, r-1$, but maps the $j$-th element of the last block to the $\rho(j)$-th element of the first block).
On the other hand, for each $i$, there exists a unique automorphism $\rho_i\in Aut(S)$ such that $x_{i+r'} = x_i^{\rho_i}$ for {\it all} $x\in \soc(K)$, and we may apply this in particular for $x=\sigma^{\tau}$. For example, since the entries in components number $2$ and  $r'+2$ respectively of this element are $\sigma_1$ and $\sigma_{r'+1}=\sigma_1^{\rho_1}$, we find that $\rho_2=\rho_1$, and in the same way 
\begin{equation}
\label{eq:cases}\rho_i = \begin{cases}\rho_1, \text{ for } i\in \{1,\dots, r-r'\}\\
\rho\rho_1, \text{ for } i\in \{r-r'+1, \dots, r\}\end{cases}.\end{equation}
Now choose $\sigma\in S_{(i)}$ (i.e., $\sigma_j=1$ for all $j$ not congruent to $i$ modulo $r'$) for some fixed $i\in \{1,\dots, r'\}$. The nonidentity component entries of $\sigma$ then fulfill the relations $\sigma_{i+r'}=\sigma_i^{\rho_1}$, $\sigma_{i+2r'}=\sigma_{i+r'}^{\rho_1} = \sigma_i^{\rho_1^2}$, $\dots$, and finally $\sigma_i = \sigma_{i+(\frac{r}{r'}-1)r'}^{\rho\rho_1} = \dots = \sigma_i^{\rho\cdot \rho_1^{r/r'}}$. This means that ${\rho\cdot \rho_1^{r/r'}}$ centralizes $S$ and hence must be the identity. In other words, \begin{equation}
\label{eq:rho}
\rho_1^{r/r'}=\rho^{-1}.\end{equation} 
So $\langle\rho_1\rangle$ is a cyclic overgroup of $\langle\rho\rangle$ inside $Aut(S)$. By Corollary \ref{cor:overgr}, 
 $\rho_1$ must be a generator of the cyclic group $\langle\rho\rangle \le Aut(S)\cap S_s$.
 Incidentally, this also implies that $r/r'$ must be coprime to $s$.

Now to bound the proportion of $n$-cycles in $K_0\tau$ for a fixed $n$-cycle $\tau$, we may in fact instead estimate the proportion of $n$-cycles in $S_{(i)}\tau$ for any fixed $i$, say $i=1$ for convenience.
\footnote{We again point out a noteworthy detail: Counting by cosets of $S_{(i)}$ requires first choosing an $n$-cycle $\tilde{\tau}$ in any fixed coset of $S_{(i)}$ inside $K_0\tau$. Again, we will pretend $\tilde{\tau}$ to be of the ``default form" $(1,2,\dots, n)$ chosen previously for $\tau$, and again this corresponds to performing an automorphism of $K$. The subgroups $S_{(i)}$ themselves are {\bf not} necessarily fixed by this automorphism; however, they are necessarily permuted among each other, due to being the unique minimal normal subgroups of the block kernel $K$. The fact that the following estimates work in the same way for {\it each} $S_{(i)}$ means that we will indeed obtain a correct estimate for the proportion of $n$-cycles in all of $K_0\tau$ in this way.}

Assume that $\sigma\in S_{(1)}$ is such that $\sigma\tau$ is an $n$-cycle. In particular, $(\sigma\tau)^r\in K$ has an $s$-cycle in each of its components.

But by \eqref{eq:product}, the first entry of $(\sigma\tau)^r$ equals 
$\sigma_1 \sigma_1^{\rho_1}\cdots \sigma_1^{\rho_1^{r/r'-1}}\cdot \rho = (\sigma_1\rho_1^{-1})^{r/r'}\cdot \rho_1^{r/r'} \cdot \rho$, and this equals  $(\sigma_1\rho_1^{-1})^{r/r'}$ by \eqref{eq:rho}.

Here, $\rho_1\in Aut(S)$ is fixed, and we are therefore reduced to estimating how often $\sigma_1\rho_{1}^{-1}$ can be an $s$-cycle as $\sigma_1$ moves through  $S$. This amounts to estimating the highest number of $s$-cycles occurring in any coset of $S$ inside its symmetric normalizer $Aut(S)\cap S_s$. We can do this by going through the list of cases in Theorem \ref{thm:class}.

For $S=A_s$ the alternating group ($s\ge 5$), clearly the number of $s$-cycles in a coset of $S$ is at most $\frac{2}{s}|S|$. This is because the total number of $s$-cycles in $S_s$ is $(s-1)! = |S|\cdot \frac{2}{s}$. Hence the proportion of $n$-cycles in $K_0\tau$ (with $K_0:=K\cap (A_s)^r$) is at most $\frac{2}{s}$. 

Next, assume $S=PSL_d(q)$ with $d\ge 2$ and $(d,q)\ne (2,8)$. As seen already, the only $s$-cycles in $P\Gamma L_d(q)$ are then the Singer cycles of $PGL_d(q)$. It is known that all cyclic subgroups generated by Singer cycles are conjugate to each other in $PGL_d(q)$ (e.g., \cite[Corollary 2]{Jones2}), and furthermore for any Singer cycle $x$, one has $|N_{PGL_d(q)}(\langle x\rangle)/C_{PGL_d(q)}(x)| = d$, cf.\  \cite[Chapter II, Satz 7.3]{Huppert}. In particular, exactly $d$ of the $\varphi(s)$ generators of one given cyclic Singer subgroup are conjugate to each other in $PGL_d(q)$. Also, since $PGL_d(q)/PSL_d(q)\cong C_{(d,q-1)}$ is cyclic, conjugate elements necessarily lie in the same coset of (the commutator subgroup) $PSL_d(q)$. From this information, we can obtain a formula for the proportion of full cycles in a given coset of $PSL_d(q)$ inside $PGL_d(q)$. Indeed, there are a total of $\varphi((d,q-1))$ cosets containing $s$-cycles at all; the total number of conjugacy classes of $s$-cycles is $\frac{\varphi(s)}{d}$, whence the total number of $s$-cycles is $|PGL_d(q)|/s \cdot \frac{\varphi(s)}{d}$. The number of $s$-cycles in each coset (containing any such cycle) is thus 
$$|PGL_d(q)|\cdot \frac{\varphi(s)}{d\cdot s\cdot \varphi((d,q-1))} = |PSL_d(q)|\cdot \dfrac{\frac{\varphi(s)}{s}}{\frac{\varphi((d,q-1))}{(d,q-1)}}\cdot \frac{1}{d},$$ yielding the claimed value for $\beta$.
%

The remaining cases $S\in \{PSL_2(8), PSL_2(11), M_{11}, M_{23}\}$ (of degree $9,11,11$ and $23$ respectively) can quickly be checked by hand. In fact, the last three simple groups are all their own symmetric normalizer, whence it suffices to check the proportion of full cycles in the simple group itself. This proportion is $\frac{2}{11}$, $\frac{2}{11}$ and $\frac{2}{23}$ respectively. Finally, let $S=PSL_2(8) < P\Gamma L_2(8)$. Here each of the three cosets of the socle contains a $9$-cycle proportion of exactly $\frac{1}{3} = \frac{\varphi(9)}{9}\cdot \frac{1}{2}$.


%

%
%
{\it Part 3: The case $H$ solvable}: 
We finally treat the case  of solvable $H$, i.e., $C_p\le H\le AGL_1(p)$ for some prime $p$; or $H\cong S_4$. Here, we cannot apply Lemma \ref{carmel}. 

{\it Part 3a: The case $H=S_4$}:
First, consider the (more complicated) case $H=S_4$. Since the image of $K$ in the action on any given block is a normal subgroup of $H$ containing a $4$-cycle, we know that this image is still all of $S_4$. Consider now the characteristic subgroup $K_0:= K\cap A_4^r$ of $K$. Firstly, the ``$\alpha$-value" $\alpha=\begin{cases}\frac{1}{2}, \text{ if } s \text{ even and } r \text{ odd},\\ 1, \text{ otherwise}\end{cases}$ is obtained using this subgroup $K_0$ just as before. 
To obtain the $\beta$-value $\beta=\frac{1}{2}$ (and hence, once again an upper bound $\alpha\beta$ as claimed in part 2) of the assertion of Proposition \ref{prop:cruc}), we replace $G$ by $G_0:=K_0\langle\tau\rangle$ for the moment. This group still has block kernel projecting onto $S_4$ in every component due to the presence of a $4$-cycle, Furthermore, since $K_0$ is of index $2$ in this block kernel, it follows that projection of $K_0$ to any component yields (a normal subgroup of $A_4$ of index at most $2$, i.e.,) all of $A_4$. Moreover, the $2$-Sylow subgroup $V$ of $K_0$ is an abelian normal subgroup of $G_0$, and $K_0$ is {\bf $2$-perfect}, meaning that there is no index-$2$ normal subgroup in $K_0$; indeed, if there were such a subgroup $W$ of $K_0$, then by Maschke's theorem in representation theory (applied to the linear action of a $3$-Sylow subgroup of $K_0$ on the $2$-Sylow subgroup), it would be complemented in $K_0$ by another normal subgroup $W'\le V$, and then $W'\subseteq Z(K_0)$ - but clearly $Z(K_0) = \{1\}$ since even the component projections of $K_0$ have trivial center.
We can thus apply a theorem of Higman \cite{Higman} to obtain that $V$ has a complement in $G_0$, i.e., $G_0=VX$ for some subgroup $X$ with $X\cap V=\{1\}$. 
Choose any element of $X$ in the same $V$-coset as $\tau$. This element cannot be a $4r$-cycle, since indeed the $2r$-th power of any $4r$-cycle in $G$ lies in $V=G_0\cap V_4^r$, and $V\cap X=\{1\}$. Therefore, the proportion of full cycles in $K_0\tau$ is less than $1$.

Since any conjugacy class of $4r$-cycles in $G_0$ is of length $\frac{|G_0|}{4r}$ (due to the fact that a transitive cyclic subgroup is its own centralizer) and furthermore the conjugacy class of such an element $\tau$ is contained in $K_0\tau$ \footnote{Indeed, this is due to $G_0/K_0 \cong C_{2r}$ being abelian.} (which has cardinality $\frac{|G|}{2r}$), we have obtained that the proportion of full cycles in $K_0\tau$ is in fact at most $\beta=\frac{1}{2}$. 

{\it Part 3b: The case $H\le AGL_1(p)$}: Set $K_0:=K\cap C_p^r$. First assume that $p$ and $r$ are coprime. Then the normal subgroup $K_0$ has a complement in $\langle K_0,\tau\rangle$ by the Schur-Zassenhaus theorem. In particular, not every element in $K_0\tau$ is an $n$-cycle, whence $\pi(K_0\tau) < 1$. But just as in the case $H=S_4$, since conjugacy classes of $n$-cycles in $\langle K_0,\tau\rangle$ respect $K_0$-cosets, there can be at most $p-1$ such conjugacy classes per coset, yleiding $\pi(K_0\tau)\le \frac{p-1}{p}=:\beta$ under the assumption $(p,r)=1$. Next, assume that $p$ divides $r$, but $K$ is not an elementary-abelian $p$-group. In this case, we can argue just as in Case 3a) to see that $K\tau$ cannot consist only of $n$-cycles (and hence, that the bound $\beta=\frac{p-1}{p}$ is valid here as well): indeed, $K$ is then $p$-perfect, and by applying Higman's theorem \cite{Higman}, we see that $K_0$ has a complement in $K\langle\tau\rangle$, which as above yields elements in $K\tau$ which are not full cycles.

{\it Part 4: Final calculations}:
It remains to verify that the product $\alpha\beta$ of upper bounds obtained above is in all cases less or equal to the bound claimed in Proposition \ref{prop:cruc}. 
For $C_p\le H\le AGL_1(p)$, there is nothing left to show since the $\beta$-value obtained above already gives the bound claimed in case 1) of the proposition. 
For $A_s\le H\le S_s$ ($s\ge 5$) odd, we have the value $\alpha\beta = \frac{2}{s}=\frac{1}{2}\cdot\frac{4}{s}\le \frac{1}{2}\frac{\varphi(s)}{s}\le \frac{1}{2} \prod_{p|s, (p,r)=1} \frac{p-1}{p}$. Similarly, for $H=S_s$ ($s\ge 4$ even), we have $\alpha\beta=\frac{2}{s}$ or $\frac{1}{s}$, depending on whether $r$ is or is not divisible by $2$, and this is once again upper bounded by $\frac{1}{2}\prod_{p|s, (p,r)=1} \frac{p-1}{p}$.

For $S=PSL_d(q)$ (and $s=\frac{q^d-1}{q-1}$, we have $\alpha\beta=(\prod_{p|(d,q-1),  \ (p,r)=1} \frac{p-1}{p}) \dfrac{\frac{\varphi(s)}{s}}{\frac{\varphi((d,q-1))}{(d,q-1)}}\cdot \frac{1}{d} =\frac{1}{d}\cdot  \dfrac{\prod_{p|s}\frac{p-1}{p}}{\prod_{p|(d,q-1), p|r} \frac{p-1}{p}} \le \frac{1}{d} \prod_{p|s, (p,r)=1}\frac{p-1}{p}$, clearly satisfying the claimed bound. 
Lastly, the exceptional cases $M_{11}$ and $PSL_2(11)$ of degree $s=11$ and $M_{23}$ of degree $s=23$ all give a bound of $\alpha\beta=\beta=\frac{2}{s}$, and the claimed bound is obvious from this, completing the proof.
\end{proof}

We next deduce Corollary \ref{cor:manyfibers} from Theorem \ref{thm:ncycleprop}.
\begin{proof}[Proof of Corollary \ref{cor:manyfibers}]
Assume that in the sequence $(f_n)_{n\in \mathbb{N}}$, there are infinitely many elements not linearly related over $\K$ to a monomial or Dickson polynomial. Since those polynomials are indecomposable, it follows  that they must have monodromy group $S_4$ or a nonsolvable group, cf.\ Theorem \ref{thm:monclass}.
For given $\epsilon>0$, choose $d\in \mathbb{N}$ such that $2^{-d}<\epsilon$, and choose $n\in \mathbb{N}$ such that at least $d$ polynomials out of $f_1,\dots, f_n$ are not linearly related to a monomial or Dickson polynomial.
Let $G_n:=\textrm{Gal}((f_1\circ \dots\circ f_n)(X)-t/\K(t))$ be the monodromy group of the $n$-th compositum $f_1\circ \dots\circ f_n$. Due to Hilbert's irreducibility theorem, for all $a\in \K$ outside of some thin set, one has $\textrm{Gal}((f_1\circ\dots\circ f_n)(X)-a/\K) \cong G_n$. I.e., for such $a$, in the tower $\K\subset \K(f_1^{-1}(a))\subset \K(f_2^{-1}(f_1^{-1}(a)))\subset \dots\subset \K((f_1\circ\dots\circ f_n)^{-1}(a))$, there are at least $d$ (indecomposable) steps whose Galois group is $S_4$ or nonsolvable.
Consequently, due to Theorem \ref{thm:ncycleprop}, the proportion of full cycles in $G_n$, and with it the proportion of stable primes for $(f_1-a, f_2,f_3,\dots)$, is upper bounded by $2^{-d}<\epsilon$.
\end{proof}

\section{A conditional result}
Finally, we will address Question \ref{ques:main}. 
The following question is asked in a much broader context in \cite{BDGHT}. We only extract the special case of polynomial maps over number fields relevant for our application. Other special cases, such as the case of quadratic rational functions, reduce to previously made conjectures, see \cite[Conj.\ 3.11]{Jones13}.
\begin{question}
\label{ques:bridy}
Let $\K$ be a number field and $f\in \K[X]$ be a polynomial of degree $\ge 2$. Let $G_{t,\infty}:=\varprojlim Gal(f^{\circ n}(X)-t/\K(t))$ (with $t$ transcendental), and for $a\in \K$, let $G_{a,\infty}:=\varprojlim \textrm{Gal}(f^{\circ n}(X)-a/\K)$. Is it true that $[G_{t,\infty}:G_{a,\infty}]$ is finite whenever none of the following hold?
\begin{itemize}
\item[1)] $a$ lies in the forward orbit of some ramification point of $f$.
\item[2)] $a$ 
is fixed by some nonidentity map $\psi: \mathbb{P}^1_\K\to \mathbb{P}^1_\K$ fulfilling $\psi\circ f^{\circ n} = f^{\circ n}\circ \psi$ for some $n\ge 1$.
\end{itemize}
\end{question}

Note that Case 2) above includes the case of $a$ being a periodic point, since one may then choose $\psi$ as a suitable iterate of $f$ itself.

\begin{theorem}
\label{thm:evidence}
Assuming that Question \ref{ques:bridy} has a positive answer, it follows that for any polynomial $f\in \K[X]$ which is not a composition of monomials, Dickson polynomials and linear polynomials, and for all $a\in \K$, the set of stable primes of $(f,a)$ is of density $0$.
\end{theorem}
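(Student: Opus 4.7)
The strategy is a proof by contradiction: suppose the set of stable primes of $(f,a)$ has positive density. Applying Theorem \ref{thm:main_add} to the constant sequence $\mathcal{S}=(f-a,f,f,\dots)$ forces $|\Gal(f^{\circ n}(X)-a/\K)| \ll c(d^n)$, where $c(m)$ denotes the maximum order of a transitive iterated wreath product of cyclic groups inside $S_m$ and $d:=\deg f$. On the other hand, the hypothesis that $f$ is not a composition of monomials, Dickson polynomials and linear polynomials ensures, via Theorem \ref{thm:monclass}, that at least one indecomposable factor $g$ of $f$ has monodromy group not of ``$C_p\le H\le AGL_1(p)$'' type. Using wreath-product lower bounds on iterated geometric monodromy (the non-solvable case is covered by \cite{KNR}; for the remaining cases $\mathrm{Mon}(g)\in\{A_4,S_4\}$ a direct comparison of iterated wreath product orders suffices), one deduces that $|\Gal(f^{\circ n}(X)-t/\K(t))|$ grows strictly faster than $c(d^n)$, so the specialization index $[G_{t,\infty}:G_{a,\infty}]$ must be infinite.

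By the assumed positive answer to Question \ref{ques:bridy}, $a$ must then fall into one of the two exceptional cases of that question. In case (1), $a=f^{\circ k}(c)$ for some ramification point $c$ of $f$; the chain rule yields $(f^{\circ k})'(c)=0$, so $c$ is a multiple root of $f^{\circ k}(X)-a$. By Galois-invariance of the critical locus, the minimal polynomial of $c$ over $\K$ divides $f^{\circ k}(X)-a$ with multiplicity at least $2$, making this polynomial reducible over $\K$; hence the set of stable primes is in fact finite. The sub-case of case (2) where $a$ is periodic is analogous: $a$ is a root of $f^{\circ k}(X)-a$ for the period $k$, and reducibility follows. In the general sub-case of (2), with $a$ fixed by a nonidentity map $\psi:\mathbb{P}^1_\K\to\mathbb{P}^1_\K$ satisfying $\psi\circ f^{\circ n}=f^{\circ n}\circ\psi$, the map $\psi$ provides a $\K$-rational self-map of the preimage tree $\bigcup_N f^{-N}(a)$ that commutes with the Galois action; this centralizer structure should allow a variant of the $n$-cycle count of Theorem \ref{thm:ncycleprop}, incorporating the centralizer constraint, to drive the proportion of $d^N$-cycles in $\Gal(f^{\circ N}(X)-a/\K)$ to zero as $N\to\infty$.

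The main obstacle is this last sub-case of (2), particularly when $\psi$ is not an automorphism of $\mathbb{P}^1$: one must quantify how the commuting self-map forbids many full cycles uniformly across levels $N$, and verify that the resulting bound genuinely tends to $0$ rather than merely to a positive constant strictly less than $1$. Carrying this out will require careful bookkeeping of how $\psi$ interacts with the wreath-product structure of the iterated preimage tower.
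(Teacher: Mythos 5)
The decisive gap is the one you flag yourself: Condition 2) of Question \ref{ques:bridy} with $a$ not periodic. Your hope that a ``centralizer-constrained'' variant of the cycle count in Theorem \ref{thm:ncycleprop} would force the proportion of full cycles to $0$ is not carried out, and it is not obviously sound either: a full cycle does centralize nontrivial tree automorphisms (its own powers), so the mere existence of a $\K$-rational symmetry of the preimage tree commuting with the Galois action does not by itself exclude Frobenius elements acting as full cycles. The paper settles this case by completely different means, and both inputs are missing from your proposal. For $\deg\psi>1$ it invokes Ritt/Pakovich: a polynomial iterate commuting with a rational map of degree $>1$ is either linearly related to a monomial or Dickson polynomial (excluded, since $f$ is not a composition of such) or has a common iterate with $\psi$; in the latter case $\psi(a)=a$ forces $a$ to be periodic, so there are no stable primes at all. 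For $\deg\psi=1$ it uses Pakovich's finiteness of the group of M\"obius transformations commuting with iterates of $f$: each nontrivial such transformation fixes at most two points, and the backward orbit of $a$ is infinite (periodic $a$ having been dealt with), so one can choose $b\in f^{-n}(a)$ fixed by no nontrivial symmetry; a stable prime of $\K$ for $(f,a)$ extends to a stable prime of $\K(b)$ for $(f,b)$, and $(f,b)$ satisfies neither exceptional condition, so the already-treated ``finite index'' case applies to $(f,b)$ and hence to $(f,a)$. Without these two arguments the theorem is not proved.

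Your opening step is also weaker than you present it. To deduce $[G_{t,\infty}:G_{a,\infty}]=\infty$ from positive density you compare the bound $|\Gal(f^{\circ n}(X)-a/\K)|\ll c(d^n)$ of Theorem \ref{thm:main_add} with a claimed lower bound for $|\Gal(f^{\circ n}(X)-t/\K(t))|$. But the result quoted from \cite{KNR} concerns indecomposable $f$ with monodromy $A_d$ or $S_d$ only; for decomposable $f$, for the other nonsolvable groups in Theorem \ref{thm:monclass}, and in the $S_4$ case, no such growth bound is cited, and a ``direct comparison of iterated wreath product orders'' tacitly assumes the iterated geometric monodromy is (essentially) the full iterated wreath product, which fails, e.g., for post-critically finite maps. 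The paper needs no growth estimate at all: it only uses that, since some indecomposable factor of $f$ has monodromy $S_4$ or nonsolvable, $G_{t,\infty}$ does not embed into an iterated wreath product of groups $AGL_1(p_i)$ even after finite base change; combined with finiteness of the index, this directly contradicts the conclusion of Theorem \ref{thm:main} under a positive-density assumption. (Your treatment of Condition 1) and of periodic $a$ via reducibility over $\K$ is essentially fine, though the paper's observation that these cases admit no stable primes at all, by inseparability respectively a $\K$-rational root, is quicker.)
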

\begin{proof}
Note first that a point $a$ lying in the forward orbit of some ramification point of $f$ leads to $f^{\circ n}(X)-a$ being inseparable for some, and hence all sufficiently large $n$, and hence there are no stable primes at all for $(f,a)$ in this case. Regarding Condition 2) in Question \ref{ques:bridy}, we divide the investigation into the case $\deg(\psi)=1$ and $\deg(\psi)>1$. As for the latter case, it follows from work of Ritt (\cite{Ritt}; see also Theorem 1.2 and the following discussion in \cite{Pakovich}) that a polynomial (here, $f^{\circ n}$) commuting with some rational function $\psi$ of degree $>1$ must either be linearly related to a monomial or Dickson polynomial, or must have a common iterate with $\psi$.
%
%
 Since we have excluded the former, it suffices to consider the case were $f^{\circ N} = \psi^{\circ M}$ for suitable $M,N\in \mathbb{N}$. But then Condition 2) implies that $a$ is invariant under $f^{\circ N}$, and in particular $a$ is a periodic point of $f$. In this case as well, it is obvious that $(f,a)$ cannot have any stable primes. 
 
 Thus, still excluding the case $\deg(\psi)=1$ in Condition 2), existence of stable primes for $(f,a)$ under our assumptions automatically implies finiteness of $[G_{t,\infty}:G_{a,\infty}]$, and then the set of stable primes is of density $0$ by Theorem \ref{thm:main}, since by assumption, $G_{t,\infty}$ does not embed into an iterated wreath product of groups $AGL_1(p_i)$ even after finite base change. 
 
  Lastly, assume we are in Condition 2), but with $\deg(\psi)=1$ (i.e., $\psi$ is a non-identity M\"obius transformation commuting with some iterate $f^{\circ n}$ of $f$ 
  and fixing $a$). We may also assume $\{f^{-n}(a): n\in \mathbb{N}\}$ to be infinite, since the case of periodic points has already been dealt with. Note that for $f$ not linearly related to a monomial, the group of all M\"obius transformations (even over $\mathbb{C}$) commuting with some iterate of $f$ is a finite group, cf.\ \cite[Theorem 1.2]{Pakovich2}. Since a non-identity M\"obius transformation  does not fix more than two points, there exists $n\in \mathbb{N}$ such that $b:=f^{-n}(a)$ is not fixed by any non-identity M\"obius transformation commuting with some iterate of $f$. Note also that a stable prime (of $\K$) for $(f,a)$ is necessarily extended by a stable prime (of $\K(b)$) for $(f,b)$. But then, due to the above treatment, we may assume that $(f,b)$ fulfills neither Condition 1) nor 2) of Question \ref{ques:bridy}, implying as above that $G_{b,\infty}$ does not embed into an iterated wreath product of groups $AGL_1(p_i)$ after finite base change. The same then holds a fortiori for $G_{a,\infty}$, concluding the proof.
\end{proof}


\begin{thebibliography}{9}
 \bibitem{Benedetto} R.\ Benedetto, P.\ Ingram, R.\ Jones, M.\ Manes, J.H.\ Silverman, T.J.\ Tucker, \textit{Current trends and open problems in arithmetic dynamics}. Bull.\ Amer.\ Math.\ Soc.\  56(4) (2019), 611--685.
 \bibitem{BDGHT} A.\ Bridy, J.R.\ Doyle, D.\ Ghioca, L.-C.\ Hsia, T.J.\ Tucker, \textit{A question for iterated Galois groups in arithmetic dynamics}. Canadian Math.\ Bull.\ 64(2) (2021), 401--417.
 \bibitem{DZ} R.\ Dvornicich, U.\ Zannier, \textit{Cyclotomic Diophantine problems (Hilbert irreducibility and invariant sets for polynomial maps)}. Duke Math.\ J.\ 139 (3) (2007), :527--554.
 \bibitem{Ferr} A.\ Ferraguti, \textit{The set of stable primes for polynomial sequences with large Galois group}. Proc.\ Amer.\ Math.\ Soc.\ 146(7) (2018), 2773--2784.
 \bibitem{FOZ} A.\ Ferraguti, A.\ Ostafe, U.\ Zannier, \textit{Cyclotomic and abelian points in backward orbits of rational functions}. Adv.\ in Math.\ 438 (2024), 109463.
 \bibitem{Higman} G.\ Higman, \textit{Complementation of abelian normal subgroups}. Publ.\ Math.\ Debrecen 4 (1956), 455--458.
 \bibitem{Huppert} B.\ Huppert, \textit{Endliche Gruppen I}. Grundlehren der mathematischen Wissenschaften 134. Springer, Berlin Heidelberg NewYork, 1967.
 \bibitem{Jones2}  G. Jones, \textit{Cyclic regular subgroups of primitive permutation groups}. J. Group Theory 5 (2002), 403--407.
 \bibitem{Jones} R.\ Jones, \textit{An iterative construction of irreducible polynomials reducible modulo every prime}. J.\ Algebra 369 (2012), 114--128.
 \bibitem{Jones13} R.\ Jones, \textit{Galois representations from pre-image trees: an arboreal survey.} Actes de la Conf\'erence ``Th\'eorie des Nombres et Applications", Publ.\ Math.\ Besancon. Alg\`ebre et Th\'eorie de Nombres (2013), 107--136.
 \bibitem{JonesLevy} R.\ Jones, A.\ Levy, \textit{Eventually stable rational functions}. Int.\ J.\ Number Theory 13 (9) (2017), 2299--2318.
  \bibitem{KK} L.\ Kaloujnine, M.\ Krasner, \textit{Produit complet des groupes de permutations et le probl\`eme d'extension de groupes II}. Acta Sci.\ Math.\ Szeged.\ 14 (1951), 39--66.
 \bibitem{KN} J.\ K\"onig, D.\ Neftin, \textit{Reducible fibers of polynomial maps}. IMRN Vol.\ 2024 Issue 6 (2024), 5373--5402.
 \bibitem{KNR} J.\ K\"onig, D.\ Neftin, S.\ Rosenberg, \textit{Polynomial compositions with large monodromy groups and applications to arithmetic dynamics}. Preprint (2024). \texttt{arxiv.org/abs/2401/17872}.
  \bibitem{Praeger} C.H.\ Li, C.\ Praeger, \textit{On finite permutation groups with a transitive cyclic subgroup}. J.\ Algebra 349 (2012), 117--127.
  \bibitem{MOS} L.\ Merai, A.\ Ostafe, I.E.\ Shparlinski, \textit{Dynamical irreducibility of polynomials modulo primes}. Math.\ Z.\ 298 (2021), no. 3-4, 1187--1199.
  \bibitem{Mue} P.\ M\"uller, \textit{Primitive monodromy groups of polynomials}. Contemp.\ Math.\ 186 (1995), 385--401.
 \bibitem{Muz} M.\ Muzychuk, \textit{On the isomorphism problem for cyclic combinatorial objects}. Discr.\ Math.\ 197-198 (1999), 589--606.
 \bibitem{Odoni} R.W.K.\ Odoni, \textit{The Galois theory of iterates and composites of polynomials}. Proc.\ London
Math.\ Soc.\ (3) 51 (1985), no.\ 3, 385--414.
\bibitem{Pakovich} F.\ Pakovich, \textit{Finiteness theorems for commuting and semiconjugate rational functions}. Conf.\ Geom.\ and Dynamics 24 (2020), 202--229.
\bibitem{Pakovich2} F.\ Pakovich, \textit{On symmetries of iterates of rational functions}. To appear in Ann.\ Sc.\ Norm.\ Super.\ Pisa. Preprint at \texttt{https://arxiv.org/abs/2006.08154}.
\bibitem{Ritt} J.F.\ Ritt, \textit{Permutable rational functions}. Trans.\ Amer.\ Math.\ Soc.\ 25 (1923), 399--448.
 \end{thebibliography}
 \end{document}